\newcommand{\R}{{\mathbb R}}
\newcommand{\C}{{\mathbb C}}
\newcommand{\Sp}{{\mathbb S}}
\newcommand{\ds}{\displaystyle}
\newcommand{\be}{\begin{eqnarray}}
\newcommand{\ben}{\begin{eqnarray*}}
\newcommand{\en}{\end{eqnarray}}
\newcommand{\enn}{\end{eqnarray*}}
\newcommand{\ba}{\backslash}
\newcommand{\pa}{\partial}
\newcommand{\ov}{\overline}
\newcommand{\eps}{\epsilon}
\newcommand{\om}{\omega}
\newcommand{\la}{\lambda}
\newcommand{\hth}{\hat{\theta}}
\newcommand{\hx}{\hat{x}}
\newtheorem{theorem}{Theorem}[section]
\newtheorem{lemma}[theorem]{Lemma}
\newtheorem{remark}[theorem]{Remark}
\begin{document}
\title{\bf Target reconstruction with a reference point scatterer using phaseless far field patterns}
\author{Xia Ji\thanks{LSEC, NCMIS and Academy of Mathematics and Systems Science, Chinese Academy of Sciences,
Beijing 100190, China. Email: jixia@lsec.cc.ac.cn (XJ)},
\and
Xiaodong Liu\thanks{NCMIS and Academy of Mathematics and Systems Science,
Chinese Academy of Sciences, Beijing 100190, China. Email: xdliu@amt.ac.cn (XL)},
\and
Bo Zhang\thanks{LSEC, NCMIS and Academy of Mathematics and Systems Science, Chinese Academy of Sciences,
Beijing 100190, China and School of Mathematical Sciences, University of Chinese Academy of Sciences,
Beijing 100049, China. Email: b.zhang@amt.ac.cn (BZ)}}
\date{}
\maketitle

\begin{abstract}
An important property of the phaseless far field patterns with incident plane waves is the translation invariance.
Thus it is impossible to reconstruct the location of the underlying scatterers. By adding a reference point
scatterer into the model, we design a novel direct sampling method using the phaseless data directly.
The reference point technique not only overcomes the translation invariance, but also brings a practical
phase retrieval algorithm.
Based on this, we propose a hybrid method combining the novel phase retrieval algorithm and the classical
direct sampling methods. Numerical examples in two dimensions are presented to demonstrate their effectiveness
and robustness.

\vspace{.2in}
{\bf Keywords:} Phaseless data; phase retrieval; stability; sampling method; far field pattern;

\vspace{.2in} {\bf AMS subject classifications:}
35P25, 45Q05, 78A46, 74B05

\end{abstract}

\section{Introduction}

The {\em inverse scattering theory} has been a fast-developing area for the past thirty years.
Applications of inverse scattering problems occur in many areas such as radar, nondestructive testing,
medical imaging, geophysical prospection and remote sensing. Due to their applications, the inverse
scattering problems have attracted more and more attention, and significant progress has been made
for both the mathematical theories and numerical approaches \cite{CC2014,CK,Isakov,Kirsch,KirschGrinberg}.

In many cases of practical interest, it is very difficult and expensive to obtain the phased data,
while the phaseless data is much easier to be achieved.
Unfortunately, the reconstructions with phaseless data are highly nonlinear and much more severely
ill-posed \cite{AmmariChowZou}.
By adding a reference point scatterer into the scattering system, we introduce a direct sampling method
using the corresponding phaseless far field data directly.
Using at most three different scattering strength, we propose a novel phase retrieval scheme.
We show that, if the point scatterer is far away from the unknown scatterers, such a phase retrieval scheme
is Lipschitz stable with respect to the measurement noise. Based on this, we propose certain fast and robust
algorithms for scatterer reconstructions by combining classical sampling methods.

We begin with the formulations of the acoustic scattering problems. Let $k=\om/c>0$ be the wave number of
a time harmonic wave where $\om>0$ and $c>0$ denote the frequency and sound
speed, respectively. Let $D\subset\R^n (n=2,\, 3)$ be an open and bounded domain with
Lipschitz-boundary $\pa D$ such that the exterior $\R^n\ba\ov{D}$ is connected.
Furthermore, let the incident field $u^i$ be a plane wave of the form
\be\label{incidenwave}
u^i(x)\ =\ u^i(x,\hth) = e^{ikx\cdot \hth},\quad x\in\R^n\,,
\en
where $\hth\in\Sp^{n-1}$ denotes the direction of the incident wave and $S^{n-1}:=\{x\in\R^n:|x|=1\}$ is
the unit sphere in $\R^n$.
Then the scattering problem for the inhomogeneous medium is to find the total field $u=u^i+u^s$ such that
\be
\label{HemEqumedium}\Delta u + k^2 (1+q)u = 0\quad \mbox{in }\R^n,\\
\label{Srcmedium}\lim_{r:=|x|\rightarrow\infty}r^{\frac{n-1}{2}}\left(\frac{\pa u^{s}}{\pa r}-iku^{s}\right) =\,0,
\en
where $q\in L^{\infty}(\R^n)$ such that the imaginary part $\Im (q)\geq 0$ and $q=0$ in $\R^n\ba\ov{D}$,
the Sommerfeld radiating condition \eqref{Srcmedium} holds uniformly with respect to all directions
$\hx:=x/|x|\in\Sp^{n-1}$.
If the scatterer $D$ is impenetrable, the direct scattering is to find the total field $u=u^i+u^s$ such that
\be
\label{HemEquobstacle}\Delta u + k^2 u = 0\quad \mbox{in }\R^n\ba\ov{D},\\
\label{Bc}\mathcal{B}(u) = 0\quad\mbox{on }\pa D,\\
\label{Srcobstacle}\lim_{r:=|x|\rightarrow\infty}r^{\frac{n-1}{2}}\left(\frac{\pa u^{s}}{\pa r}-iku^{s}\right) =\,0,
\en
where $\mathcal{B}$ denotes one of the following three boundary conditions
\ben
(1)\,\mathcal{B}(u):=u\quad\mbox{on}\, \pa D;\qquad
(2)\,\mathcal{B}(u):=\frac{\pa u}{\pa\nu}\quad\mbox{on}\ \pa D;\qquad
(3)\,\mathcal{B}(u):=\frac{\pa u}{\pa\nu}+\la u\quad\mbox{on}\ \pa D
\enn
corresponding to the case when the scatterer $D$ is sound-soft, sound-hard, and of impedance type , respectively.
Here, $\nu$ is the unit outward normal to $\pa D$ and $\la\in L^{\infty}(\pa\,D)$ is the (complex valued)
impedance function such that $\Im(\la)\geq0$ almost everywhere on $\pa D$.
The well-posedness of the direct scattering problems \eqref{HemEqumedium}--\eqref{Srcmedium} and \eqref{HemEquobstacle}--\eqref{Srcobstacle}
have been established and can be found in \cite{CC2014,CK,KirschGrinberg,LiuZhangSiam,LiuZhang2012,LiuZhangHu,Mclean}.

Every radiating solution of the Helmholtz equation has the following asymptotic
behavior at infinity \cite{KirschGrinberg, LiuIP17}
\be\label{0asyrep}
u^s(x,\hth)
=\frac{e^{i\frac{\pi}{4}}}{\sqrt{8k\pi}}\left(e^{-i\frac{\pi}{4}}\sqrt{\frac{k}{2\pi}}\right)^{n-2}
\frac{e^{ikr}}{r^{\frac{n-1}{2}}}\left\{u^{\infty}_{D}(\hat{x},\hth)
+\mathcal{O}\left(\frac{1}{r}\right)\right\}\quad\mbox{as }\,r:=|x|\rightarrow\infty,
\en
uniformly with respect to all directions $\hx:=x/|x|\in\Sp^{n-1}$.
The complex valued function $u^{\infty}_{D}=u^{\infty}_{D}(\hx,\hth)$ defined on $\Sp^{n-1}$
is known as the scattering amplitude or far field pattern with $\hat{x}\in\Sp^{n-1}$ denoting the observation direction.
A wealth of results have been obtained on determining $D$ from the knowledge of the far field pattern $u^{\infty}_{D}$.
We refer to the standard monographs \cite{CC2014,CK,Isakov,Kirsch,KirschGrinberg}.
In practice, it is not always the case that the information about the full far field pattern
is known, but instead only its modulus might be given. Thus we are interested in
the following inverse problem:

{\bf (IP1):\quad\em Determine $D$ from the knowledge of phaseless far field pattern $|u^{\infty}_{D}|$.}

A well known difficulty for {\bf (IP1)} is that it is impossible to recover the location of a scatterer
only from the phaseless far field pattern due to the translation invariance.
Specifically, for the shifted obstacle $D_{h}:=\{x+h:\;x\in D\}$, or the shifted refractive index $n_h(x):=n(x-h)$
with a fixed vector $h\in \R^{n}$, the corresponding far field pattern $u^{\infty}_{D_h}$ satisfies the equality \cite{KR97,KS,LiuSeo}:
\be\label{translation}
u^{\infty}_{D_h}(\hx,\hth)=e^{ikh\cdot(\hth-\hx)}u^{\infty}_{D}(\hx,\hth),\quad \forall\, \hx, \hth\in\Sp^{n-1},
\en
i.e., the modulus of the far field pattern is invariant under translations.
Therefore, only the shape rather than the location may be uniquely determined by the modulus of the far field pattern.
In many corresponding uniqueness results with full far field patterns, the proofs heavily rely on the fact that
the far field pattern $u^{\infty}_{D}$ uniquely determines the scattered wave $u^s$, i.e., Rellich's lemma.
If it is known a priori that the scatterer is a sound-soft ball centered at the origin, uniqueness is established
to determine the radius of the ball by a single phaseless far field datum in \cite{LZball}.
Rellich's lemma is avoided in this special case.
By investigating the high frequency asymptotics of the far-field pattern, it was proved in \cite{majda76}
that the shape of a general smooth convex sound-soft obstacle can be determined by
the modulus of the far-field pattern associated with one plane wave as the incident field.
Up to now, no uniqueness results are available in determining general scatterers with the modulus of
the far field pattern generated by one incident plane wave, $|u^{\infty}_{D}(\hx,\hth)|,\hx,\hth\in\Sp^{n-1}$,
even with the translation invariance taken into account.
Initial effort was focused on the shape reconstruction numerically.
Indeed, many efficient numerical implementations \cite{AmmariChowZou,DZG,KarageorghisJohanssonLesnic,KR97,I07,IK10,Lee}
imply that shape reconstruction from the phaseless far field pattern is possible.
However, these methods are mainly iterative schemes based on the integral equations, and thus rely heavily on a priori information about the scatterer and are computationally expensive.

In recent years, considerable effort has been made to avoid using phaseless far field data or to break the translation invariance. Most of the works focus on the case that the point sources are scattered and the phaseless total/scattered
fields are measured, where the translation invariance property does not
hold \cite{ChengHuang,Klibanov14,Klibanov17,KlibanovRomanov17,KlibanovRomanov17-SIAM}.
The other possible way to break the translation invariance property is to consider superpositions of several plane waves
rather than one plane wave as incident fields.
The first breakthrough is given in \cite{ZZ-jcp17}, where the authors proved that the translation invariance property
of the phaseless far field pattern can be broken if superpositions of two plane waves are used as the incident fields
for all wave numbers in a finite interval. Further, a recursive Newton-type iteration algorithm in frequencies is also developed to numerically recover both the location and the shape of the scatterer $D$ simultaneously from
multi-frequency phaseless far-field data.
In a recent work \cite{XZZ-SIAM}, it was proved, under certain conditions on the scatterer, that
the scatterer can be uniquely determined by the phaseless far-field patterns generated by infinitely many sets
of superpositions of two plane waves with different directions at a fixed frequency.
A fast imaging algorithm was also developed in \cite{ZZ18} to numerically recover the
scattering obstacles from the phaseless far-field data at a fixed frequency associated with infinitely many sets
of superpositions of two plane waves with different directions.
Recently, the a priori assumption on the scatterers introduced in \cite{XZZ-SIAM} was removed in \cite{XZZ18}
by adding a known reference ball to the scattering system in conjunction with a simple technique based on Rellich's
lemma and Green's representation formula for the scattering solutions.
In addition, by adding a reference ball to the scattering system uniqueness results were obtained
in \cite{ZG} for inverse scattering with phaseless far-field data corresponding to superpositions of
an plane wave and point sources as the incident fields. Accordingly, a nonlinear integral equation method
was also developed in \cite{DZG} to reconstruct both the location and the shape of a scattering obstacle
from such phaseless far-field data in two dimensions.

The reference ball technique dates back to \cite{LLZ} (see also \cite{QinLiu}), where such a technique is
used to avoid eigenvalues and choose a cut-off value for the linear sampling method. To enhance the
interaction between the reference ball and the unknown target, the ball chosen in \cite{LLZ} can not be too
small or too far away from the unknown target.
In this paper, we still consider scattering of a single plane wave given in \eqref{incidenwave}, but add a known
reference point scatterer into the scattering system, which is different from \cite{XZZ18,ZG,DZG}.
Also, different from \cite{LLZ}, we expect the interaction between the reference point scatterer and the unknown
target is as weak as possible and thus choose the point far away from the unknown target.
Actually, numerical examples show that the interaction is very weak, even though the point scatterer is
close to the target. The reference point technique not only breaks the translation invariance of the phaseless far-field
patters, but also gives a novel phase retrieval method. This makes it possible to determine the unknown target
by combining the classical scatterer reconstruction methods with the phased data. We also want to strength that
our methods proposed in the next sections are independent of any a priori geometrical or physical information
on the unknown target.

The remaining part of the work is organized as follows. In Section \ref{Pre}, we first recall the scattering
of plane waves by a point scatterer, and then consider the scattering of plane waves by a combination of
the underlying scatterer $D$ and a given point scatterer located at $z_0\in\R^n\ba\ov{D}$.
Based on this, we propose a new inverse problem to determine the scatterer $D$ with the corresponding
phaseless far field data. A simple, fast and stable phase retrieval technique is then proposed.
Section \ref{DSMs} is devoted to some direct sampling methods for scatterer reconstructions,
which make no explicit assumptions on boundary conditions or topological properties of
the scatterer $D$. These algorithms are then verified in Section \ref{NumExamples} by extensive examples
in two dimensions.

\section{Preliminaries}
\label{Pre}\setcounter{equation}{0}

\subsection{Scattering of plane waves by a point scatterer}

First, we recall the scattering of plane waves by a point scatterer \cite{Foldy}.
We consider a point scatterer located at $z_0\in\R^{n}$ in the homogeneous space $\R^{n}$.
An incident plane wave $u^{i}$ of the form \eqref{incidenwave} is scattered by the target at $z_0$.
Recall that the fundamental solution $\Phi(x,y), x,y\in \R^n, x\neq y,$ of the Helmholtz equation is given by
\be\label{Phi}
\Phi(x,y):=\left\{
         \begin{array}{ll}
         \ds\frac{ik}{4\pi}h^{(1)}_0(k|x-y|)=\frac{e^{ik|x-y|}}{4\pi|x-y|}, & n=3, \\
         \ds\frac{i}{4}H^{(1)}_0(k|x-y|), & n=2,
         \end{array}
         \right.
\en
where $h^{(1)}_0$ and $H^{(1)}_0$ are, respectively, spherical Hankel function and Hankel function of
the first kind and order zero.
Then the scattered field $u^{s}_{z_0}$ is given by
\be\label{usz0}
u^{s}_{z_0}(x,\hth,\tau)=\tau u^{i}(z_0,\hth)\Phi(x,z_0).
\en
Here, $\tau\in\C$ is the scattering strength of the target.
From the asymptotic behavior of $\Phi(x,y)$ we deduce that the corresponding far field pattern is given by
\be\label{uinfz0}
u^{\infty}_{z_0}(\hx,\hth,\tau)=\tau u^{i}(z_0,\hth)e^{-ikz_0\cdot\hx} = \tau e^{ikz_0\cdot(\hth-\hx)},
\quad \hx, \hth\in \Sp^{n-1},\,\tau\in\C.
\en
Furthermore, by the representation \eqref{uinfz0} it is easy to deduce that for $h\in\R^n$,
\be\label{TRz0}
u^{\infty}_{z_0+h}(\hx,\hth,\tau)=e^{ikh\cdot(\hth-\hx)}u^{\infty}_{z_0}(\hx,\hth,\tau),
\quad \hx, \hth\in \Sp^{n-1},\,\tau\in\C.
\en
Then the translation relation for the phaseless data $|u^{\infty}_{z_0}|$ also holds, i.e.,
given $h\in\R^n$ we have
\ben
\big|u^{\infty}_{z_0+h}(\hx,\hth,\tau)\big|=\big|u^{\infty}_{z_0}(\hx,\hth,\tau)\big|,
\quad \hx, \hth\in \Sp^{n-1},\,\tau\in\C.
\enn

\subsection{New scattering system with a given point scatterer}

Let $z_0\in\R^n\ba\ov{D}$ be a fixed point outside $D$. By adding a point scatterer into the underlying
scattering system, we consider the new scattering system by $D\cup \{z_0\}$.
In the sequel, for an incident plane wave $u^i(x) = u^i(x, \hth) = e^{ik x\cdot\hth}$ we will indicate
the dependence of the scattered field and its far field pattern on
the incident direction $\hth$ and the scattering strength $\tau$ by writing, respectively,
$u^s_{D\cup\{z_0\}}(x, \hth, \tau)$ and $u^{\infty}_{D\cup\{z_0\}}(\hx,\hth,\tau)$.
Since the point scatterer is given in advance, the inverse problem considered is modified as follows.\\

{\bf (IP2):\quad\em Determine $D$ from the knowledge of phaseless far field pattern
$\big|u^{\infty}_{D\cup\{z_0\}}\big|$.}\\

Note that if $\tau=0$ then {\bf (IP2)} is reduce to {\bf (IP1)}.
Following the arguments given in \cite{KR97,KS,LiuSeo}, it is easy to check that the translation
invariance property of the phaseless far field data $\big|u^{\infty}_{D\cup\{z_0\}}\big|$ also holds,
i.e., for any $h\in\R^n$, we have
\ben
\big|u^{\infty}_{D_{h}\cup\{z_0+h\}}(\hx,\hth,\tau)\big|=\big|u^{\infty}_{D\cup\{z_0\}}(\hx,\hth,\tau)\big|,
\quad\forall \hx, \hth\in \Sp^{n-1}, \tau\in\C.
\enn
However, the reference point $z_0$ is given in advance, and this makes it possible to determine $D$ from
the phaseless data $\big|u^{\infty}_{D\cup\{z_0\}}\big|$.

It is well known that the nature of the scatterer $D$ can be uniquely determined by the phased far field patterns $u^{\infty}_{D}$ \cite{CK}.
In our subsequent analysis, we try to retrieve these phased data from the phaseless far field patterns $\big|u^{\infty}_{D\cup\{z_0\}}\big|$.
To do so, we expect that the interaction between the unknown target $D$ and the given point scatterer is
as weak as possible.
Such a fact can be achieved by choosing the reference point scatterer far away from the target.
Actually, this is verified by the following Theorem \ref{weakinteraction}.

For any $\varphi\in H^{-1/2}(\pa D)$ and $\psi\in H^{1/2}(\pa D)$,
the single-layer potential is defined by
\ben
(\mathcal{S}\varphi)(x):=\int_{\pa D}\Phi(x,y)\varphi(y)ds(y),\quad x\in\R^n\ba{\pa D},
\enn
and the double-layer potential is defined by
\ben
(\mathcal{K}\psi)(x):=\int_{\pa D}\frac{\pa\Phi(x,y)}{\pa\nu(y)}\psi(y)ds(y),\quad x\in\R^n\ba{\pa D},
\enn
respectively. It is shown in \cite{Mclean} that the potentials
$\mathcal{S}: H^{-1/2}(\pa D)\rightarrow H^{1}_{loc}(\R^n\ba\pa D)$,
$\mathcal{K}: H^{1/2}(\pa D)\rightarrow H^{1}_{loc}(\R^n\ba\ov{D})$ are well defined.
We also define the restriction of $\mathcal{S}$ and $\mathcal{K}$ to the  boundary $\pa D$ by
 \be
\label{S} (S\varphi)(x):= \int_{\pa D}\Phi(x,y)\varphi(y)ds(y),\quad  x\in \pa D,\\
\label{K} (K\psi)(x):= \int_{\pa D}\frac{\pa\Phi(x,y)}{\pa\nu(y)}\psi(y)ds(y),\quad  x\in \pa D.
 \en
We refer to \cite{Mclean} for the properties of the boundary operators
$S: H^{-1/2}(\pa D)\rightarrow H^{1/2}(\pa D)$ and $K: H^{1/2}(\pa D)\rightarrow H^{1/2}(\pa D)$.

\begin{theorem}\label{weakinteraction}
Let $z_0$ be a point outside $D$ such that the distance $\rho:=dist (z_0, D)$ is large enough. Then we have
\be\label{uinfweaksca}
u^{\infty}_{D\cup\{z_0\}}(\hx,\hth,\tau) = u^{\infty}_{D}(\hx,\hth) + u^{\infty}_{z_0}(\hx,\hth,\tau) + O\left(\rho^{\frac{1-n}{2}}\right),\quad\forall \hx, \hth\in \Sp^{n-1}, \tau\in\C.
\en
\end{theorem}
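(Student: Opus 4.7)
The plan is to perform a Foldy--Lax type decomposition of the combined scattering system: separate the response of $D$ from that of the point scatterer and close a small scalar equation for their coupling. Using the ansatz \eqref{usz0}--\eqref{uinfz0} for the point scatterer, any solution of the coupled problem admits the representation
\ben
u^s_{D\cup\{z_0\}}(x,\hth,\tau) = u^s_D[\phi_D](x) + \tau\psi\,\Phi(x,z_0),
\enn
where $u^s_D[v^i]$ denotes the scattered field produced by $D$ alone driven by an arbitrary incident field $v^i$, and $\psi\in\C$ is the ``exciting field'' at $z_0$. Self-consistency requires that the incident field hitting $D$ be the plane wave plus the wave radiated by the point, and that the exciting field at $z_0$ be the plane wave plus the scattered wave from $D$ evaluated at $z_0$:
\ben
\phi_D = u^i + \tau\psi\,\Phi(\cdot,z_0)\quad\text{on}\ \pa D,\qquad \psi = u^i(z_0,\hth) + u^s_D[\phi_D](z_0).
\enn
Linearity of the scattering map $v^i\mapsto u^s_D[v^i]$ then collapses this coupled system to the scalar equation
\ben
\psi\bigl(1-\tau\,u^s_D[\Phi(\cdot,z_0)](z_0)\bigr) = u^i(z_0,\hth) + u^s_D(z_0,\hth).
\enn

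Next I would exploit the large-distance decay of $\Phi(\cdot,z_0)$ on the fixed set $\pa D$. Directly from \eqref{Phi}, or equivalently from \eqref{0asyrep} applied to the radiating function $\Phi(\cdot,z_0)$, one has $|\Phi(x,z_0)|\leq C\rho^{(1-n)/2}$ uniformly in $x\in\pa D$ as $\rho\to\infty$. Well-posedness of the scattering problem, realised through the boundary integral operators \eqref{S}--\eqref{K} (for the obstacle cases) or through the Lippmann--Schwinger equation (for the medium case), gives a bound of the form $\|u^s_D[\Phi(\cdot,z_0)]\|_{L^{\infty}(B)}\leq C\rho^{(1-n)/2}$ on any fixed bounded set $B\subset\R^n\ba\ov D$, together with an $O(\rho^{(1-n)/2})$ bound on the far field pattern $u^{\infty}_D[\Phi(\cdot,z_0)]$. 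Evaluating $u^s_D[\Phi(\cdot,z_0)]$ at $z_0$ itself and invoking \eqref{0asyrep} picks up a second factor $\rho^{(1-n)/2}$, so the multiplier in the scalar equation above is $1+O(\rho^{1-n})$. A direct application of \eqref{0asyrep} to $u^s_D$ also gives $u^s_D(z_0,\hth)=O(\rho^{(1-n)/2})$. Consequently the scalar equation is invertible by Neumann series for $\rho$ large enough, and produces
\ben
\psi = u^i(z_0,\hth) + O(\rho^{(1-n)/2}).
\enn

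To read off the far field I would take the far-field limit of the decomposition,
\ben
u^{\infty}_{D\cup\{z_0\}}(\hx,\hth,\tau) = u^{\infty}_D[\phi_D](\hx) + \tau\psi\,e^{-ikz_0\cdot\hx},
\enn
apply linearity once more together with $\phi_D - u^i = \tau\psi\,\Phi(\cdot,z_0) = O(\rho^{(1-n)/2})$ on $\pa D$ to conclude $u^{\infty}_D[\phi_D] = u^{\infty}_D + O(\rho^{(1-n)/2})$, and finally substitute $\psi = u^i(z_0,\hth) + O(\rho^{(1-n)/2})$. Recognising $\tau\,u^i(z_0,\hth)\,e^{-ikz_0\cdot\hx} = u^{\infty}_{z_0}(\hx,\hth,\tau)$ from \eqref{uinfz0} yields exactly \eqref{uinfweaksca}.

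The principal technical point, and the main obstacle, is a uniform estimate of the form ``small incident data on $\pa D$ produces a small scattered field and small far field pattern,'' valid simultaneously for the three boundary conditions in \eqref{Bc} and for the inhomogeneous medium model \eqref{HemEqumedium}--\eqref{Srcmedium}. This has to be carried out case by case by reducing each scattering problem to a second-kind integral equation $(I+A)\varphi = f$, respectively to a Lippmann--Schwinger equation, and exploiting that the corresponding resolvent bounds depend only on $D$ (and on $q$, $\la$) but not on $\rho$. Once that continuity is in hand, the rest of the argument is the bookkeeping sketched above.
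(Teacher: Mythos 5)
Your argument is correct, but it models the combined system differently from the paper and is in that sense a genuinely different (and more complete) route. The paper's proof never sets up a self-consistent coupling: it posits the ansatz $u^{s}_{D\cup\{z_0\}}=(\mathcal{K}-i\eta\mathcal{S})\psi_{D\cup\{z_0\}}+u^{s}_{z_0}$ in which the point scatterer radiates exactly the free-space response \eqref{usz0} driven by the plane wave alone, enforces the boundary condition on $\pa D$ through the combined-field equation $(I/2+K-i\eta S)\psi_{D\cup\{z_0\}}=-(u^i+u^s_{z_0})$, and then uses only the single estimate $u^s_{z_0}|_{\pa D}=O(\rho^{(1-n)/2})$ together with the bounded invertibility of $I/2+K-i\eta S$ to get $\psi_{D\cup\{z_0\}}=\psi_D+O(\rho^{(1-n)/2})$; the far-field expansion then drops out immediately, with no scalar equation to invert. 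Your Foldy--Lax decomposition instead lets the exciting field $\psi$ at $z_0$ see the wave scattered by $D$, closes the scalar equation $\psi\bigl(1-\tau\,u^s_D[\Phi(\cdot,z_0)](z_0)\bigr)=u^i(z_0,\hth)+u^s_D(z_0,\hth)$, and inverts it by Neumann series. Since the paper never states a PDE formulation of ``scattering by $D\cup\{z_0\}$'' (a point scatterer is not a classical obstacle, and the forward problem is effectively \emph{defined} by the proof's ansatz), both interpretations are legitimate, and they agree to within the claimed $O(\rho^{(1-n)/2})$ error because the correction to the excitation strength is itself $O(\rho^{(1-n)/2})$. What your version buys is an honest account of the back-coupling --- you quantify the multiple-scattering loop as a $1+O(\rho^{1-n})$ multiplier and need $\rho$ large only to invert it --- at the cost of the extra uniform ``small data $\Rightarrow$ small scattered/far field'' estimates you flag, which must indeed be checked case by case (second-kind boundary integral equations for the three obstacle conditions, Lippmann--Schwinger for the medium), exactly as the paper itself tacitly assumes when it says the other cases are ``dealt with similarly.'' One small point worth making explicit if you write this up: evaluating $u^s_D[\Phi(\cdot,z_0)]$ at $z_0$ via \eqref{0asyrep} requires the remainder constant there to be controlled by the norm of the boundary data, which follows from the representation formula; and for the Neumann-series step the weaker bound $O(\rho^{(1-n)/2})$ on that term would already suffice.
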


\begin{proof}
The integral equation method is used in our subsequent analysis. For simplicity, we only prove
the case when the scatterer $D$ is sound-soft. The other cases can be dealt with similarly.

We seek a solution in the form
\be\label{usDz0}
u^{s}_{D\cup\{z_0\}}(x,\hth,\tau)= (\mathcal{K}-i\eta\mathcal{S})\psi_{D\cup\{z_0\}} + u^{s}_{z_0}(x,\hth,\tau),\quad x\in\R^n\ba\ov{D\cup\{z_0\}},
\en
with a density $\psi_{D\cup\{z_0\}}\in H^{1/2}(\pa D)$ and a coupling parameter $\eta > 0$.
Then from the jump relation of the double layer potential we see that the representation $u^{s}_{D\cup\{z_0\}}$
given in \eqref{usDz0} solves the exterior Dirichlet boundary problem provided the density is a solution of
the integral equation
\ben
\Big({I}/{2}+K-i\eta S\Big)\psi_{D\cup\{z_0\}} = -(u^i+u^{s}_{z_0})\quad \mbox{on}\,\,\pa D.
\enn
Note that $I/2+K-i\eta S$ is bijective and the inverse $(I/2+K-i\eta S)^{-1}: H^{1/2}(\pa D)\rightarrow H^{1/2}(\pa D)$ is bounded \cite{CK, Mclean}. Therefore
\ben
\psi_{D\cup\{z_0\}} = -(I/2+K-i\eta S)^{-1}(u^i+u^{s}_{z_0}):=\psi_D - (I/2+K-i\eta S)^{-1}u^{s}_{z_0}.
\enn
Note that $u^s_D:=(\mathcal{K}-i\eta\mathcal{S})\psi_D$ is the radiating solution to the original scattering system
with a sound-soft obstacle $D$.
A straightforward calculation shows that the fundamental solution $\Phi$ satisfies Sommerfeld's finiteness condition
\ben
\Phi(x,y)=O\left(|x-y|^{\frac{1-n}{2}}\right), \quad |x-y|\rightarrow\infty.
\enn
Inserting this into the representation \eqref{usz0} of $u^{s}_{z_0}$ we see that
\ben
u^{s}_{z_0}\Big|_{\pa D}= O\left(\rho^{\frac{1-n}{2}}\right), \quad \rho\rightarrow\infty.
\enn
This implies that
\ben
\psi_{D\cup\{z_0\}} = \psi_D + O\left(\rho^{\frac{1-n}{2}}\right), \quad \rho\rightarrow\infty.
\enn
Inserting this into \eqref{usDz0}, we find that
\ben
u^s_{D\cup\{z_0\}}(x,\hth,\tau) = u^s_{D}(x,\hth)+ u^s_{z_0}(x,\hth,\tau) + O\left(\rho^{\frac{1-n}{2}}\right), \quad x\in\R^n\ba\ov{D\cup\{z_0\}}, \quad \rho\rightarrow\infty.
\enn
Then \eqref{uinfweaksca} follows by letting $|x|\rightarrow \infty$.
\end{proof}

\subsection{Phase retrieval}\label{subsec-phaseretrieval}

The following lemma may have its own interest.

\begin{lemma}\label{phaseretrieval}
Let $z_j:=x_j+iy_j,\,j=1,2,3,$ be three different complex numbers such that they are not collinear.
Then the complex number $z\in\C$ is uniquely determined by the distances $r_j=|z-z_j|,\,j=1,2,3$.
\end{lemma}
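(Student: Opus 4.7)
The plan is to reduce Lemma \ref{phaseretrieval} to a linear algebra statement in $\R^2$. Writing $z = x + iy$ with $x, y \in \R$, each condition $r_j = |z - z_j|$ squared gives
\ben
x^2 + y^2 - 2x_j x - 2y_j y + x_j^2 + y_j^2 = r_j^2, \quad j=1,2,3.
\enn

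First I would subtract the $j = 1$ equation from those for $j = 2$ and $j = 3$, which kills the quadratic terms $x^2 + y^2$ and leaves a $2 \times 2$ real linear system $A (x,y)^{\top} = b$ whose coefficient matrix is
\ben
A = \begin{pmatrix} x_2 - x_1 & y_2 - y_1 \\ x_3 - x_1 & y_3 - y_1 \end{pmatrix},
\enn
with $b$ an explicit vector depending only on the $z_j$'s and $r_j$'s. The next step is to observe that $\det A = (x_2 - x_1)(y_3 - y_1) - (x_3 - x_1)(y_2 - y_1)$ equals twice the signed area of the triangle with vertices $z_1, z_2, z_3$, which is nonzero exactly when the three points are not collinear. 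Since this is our hypothesis, $A$ is invertible, $(x,y)$ is uniquely determined, and hence so is $z$.

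The main (and essentially only) obstacle is the identification of $\det A$ with the collinearity condition, but this is immediate: the rows of $A$ are the vectors $z_2 - z_1$ and $z_3 - z_1$ in $\R^2$, and their linear dependence is precisely equivalent to $z_1, z_2, z_3$ lying on a common line. Geometrically the argument recovers the standard trilateration picture: the two circles $\{w\in\C : |w - z_j| = r_j\}$ for $j = 1, 2$ meet in at most two points, symmetric about the line through $z_1$ and $z_2$, and the third circle, centered at $z_3$ off this line, selects a unique one.
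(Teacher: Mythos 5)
Your proof is correct, but it takes a different route from the paper's. You linearize: squaring the three distance equations and subtracting the first from the other two eliminates the quadratic term $x^2+y^2$ and leaves a $2\times 2$ linear system whose determinant is (up to a factor of $-2$ absorbed into the right-hand side) twice the signed area of the triangle $Z_1Z_2Z_3$, hence nonzero under the non-collinearity hypothesis; any $z$ realizing the three distances must solve this system, so there is at most one such $z$. The paper instead argues synthetically: the circles about $Z_1$ and $Z_2$ meet in at most two points $Z_A, Z_B$, which are symmetric about the line through $Z_1$ and $Z_2$; if both also lay on the third circle, then $Z_1,Z_2,Z_3$ would all lie on the perpendicular bisector of $Z_AZ_B$, contradicting non-collinearity. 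Both arguments are complete and elementary. What the paper's version buys is that it is the direct blueprint for the \textbf{Phase Retrieval Scheme} that follows (compute $Z_A,Z_B$ from two circles, then use $r_3$ to select one), and for the stability analysis in Remark \ref{ZStableRec}, which tracks the perturbation of exactly those geometric quantities. What your version buys is a one-line invertibility criterion and, as a by-product, an alternative numerical recipe (solve the $2\times 2$ system) that would avoid the case distinctions of the paper's scheme; it is worth noting, though, that with noisy $r_j$ the linearized system may return a point that does not exactly satisfy any of the three circle equations, so the two algorithms would behave slightly differently under perturbation even though both proofs of the exact statement are valid.
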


\begin{proof}
Denote by $Z_j=(x_j,y_j)$ the point in the plane corresponding to the given complex number $z_j$, $j=1,2,3.$
Define $Z=(x,y)$ to be the point corresponding to the unknown complex number $z$.
Then $Z$ locates on the spheres $\pa B_{r_j}(Z_j)$ centered at $Z_j$ with radius $r_j,\,j=1,2,3$.
Note that there are at most two points $Z_A$ and $Z_B$ located simultaneously on the two spheres
$\pa B_{r_j}(Z_j), j=1,2$, i.e.
\be\label{ZZ12}
|Z_A-Z_j|=|Z_B-Z_j|=r_j, \,j=1,2.
\en
If $Z_A=Z_B$, then we just take $Z=Z_A$; otherwise, we claim that only one of the two points $Z_A$
and $Z_B$ is the point $Z$ pursued. On the contrary, we have
\ben
|Z_A-Z_3|=|Z_B-Z_3|=r_3.
\enn
This, together with \eqref{ZZ12}, implies that the three points $Z_j,\,j=1,2,3,$ are located on
the perpendicular bisector of the line segment $Z_{A}Z_B$. This contradicts to
the assumption that $z_j,\,j=1,2,3,$ are not collinear. The proof is complete.
\end{proof}

Actually, Lemma \ref{phaseretrieval}
provides a novel phase retrieval technique which can be implemented easily. Using the same notations in Lemma \ref{phaseretrieval}, we have the following phase retrieval scheme.

\begin{figure}[htbp]
\centering
\includegraphics[width=3in]{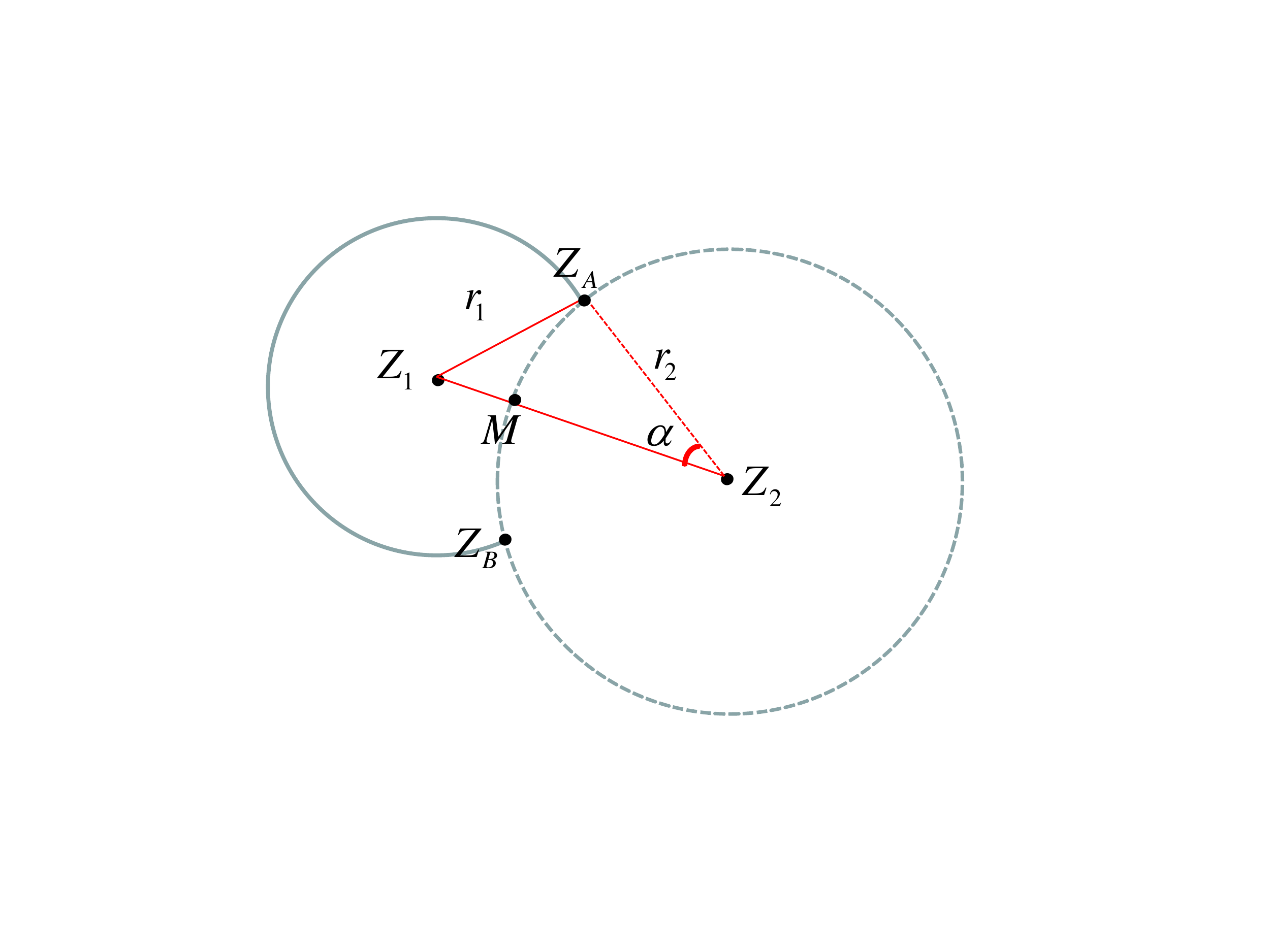}
\caption{Sketch map for phase retrieval scheme.}
\label{twodisks}
\end{figure}

{\bf Phase Retrieval Scheme. (Numerical simulation for Lemma \ref{phaseretrieval}.)}

\begin{enumerate}[(1)]{\em
\item {\bf\rm Collect the distances $r_j:=|z-z_j|$ with given complex numbers $z_j,\,j=1,2,3$. }
  If $r_j=0$ for some $j\in\{1,2,3\}$, then $Z=Z_j$; otherwise, go to the next step.
\item {\bf\rm Look for the point $M=(x_M, y_M)$.} As shown in Figure \ref{twodisks}, $M$ is the intersection
  of circle centered at $Z_2$ with radius $r_2$ and the ray $Z_2Z_1$ with initial point $Z_2$.
  Denote by $d_{1,2}:=|z_1-z_2|$ the distance between $Z_1$ and $Z_2$. Then
       \be\label{xMyM}
       x_M=\frac{r_2}{d_{1,2}}x_1+\frac{d_{1,2}-r_2}{d_{1,2}}x_2,\quad y_M=\frac{r_2}{d_{1,2}}y_1+\frac{d_{1,2}-r_2}{d_{1,2}}y_2.
       \en
  \item {\bf\rm Look for the points $Z_A=(x_A, y_A)$ and $Z_B=(x_B, y_B)$.} Note that $Z_A$ and $Z_B$
  are just two rotations of $M$ around the point $Z_2$. Let $\alpha\in [0,\pi]$ be the angle between
  the rays $Z_2Z_1$ and $Z_2Z_A$. Then, by the law of cosine we have
       \be\label{cosalpha}
       \cos \alpha=\frac{r_1^2-r_2^2-d_{1,2}^2}{2r_2d_{1,2}}.
       \en
  Noting that $\alpha\in [0,\pi]$ and $\sin^2 \alpha+\cos^2 \alpha=1$, we deduce that
  $\sin \alpha=\sqrt{1-\cos^2\alpha}$. Then
       \be
       \label{xA}x_A &=& x_2+\Re\{[(x_M-x_2)+i(y_M-y_2)]e^{-i\alpha}\},\\
       \label{yA}y_A &=& y_2+\Im\{[(x_M-x_2)+i(y_M-y_2)]e^{-i\alpha}\},\\
       \label{xB}x_B &=& x_2+\Re\{[(x_M-x_2)+i(y_M-y_2)]e^{i\alpha}\},\\
       \label{yB}y_B &=& y_2+\Im\{[(x_M-x_2)+i(y_M-y_2)]e^{i\alpha}\}.
       \en
  \item {\bf\rm Determine the point $Z$.} $Z=Z_A$ if the distance $|Z_AZ_3|=r_3$; otherwise, $Z=Z_B$.
  }
\end{enumerate}

\begin{remark}\label{ZStableRec} {\rm
Actually, the above scheme provides a stable phase retrieval algorithm. Indeed, let $\eps>0$ and assume that
\ben
|r_j^{\eps}-r_j|\leq\eps, \quad j=1,2,3.
\enn
Here, and throughout the paper, we use the subscript $\eps$ to denote the polluted data.
From \eqref{xMyM}, we deduce that
\be\label{xMyMestimate}
|x_M^{\eps}-x_M|=\frac{|x_1-x_2|}{d_{1,2}}|r_2^{\eps}-r_2|\leq \eps\quad\mbox{and}\quad |y_M^{\eps}-y_M|=\frac{|y_1-y_2|}{d_{1,2}}|r_2^{\eps}-r_2|\leq \eps.
\en
Similarly, \eqref{cosalpha} implies the existence of a constant $c_1>0$ depending on $Z_j, j=1,2,3$, such that
\ben
|e^{i\alpha^{\eps}}-e^{i\alpha}|\leq c_1\eps.
\enn
Combing this with \eqref{xMyMestimate} and \eqref{xA}-\eqref{yB}, we find that
there exists a constant $c_2>0$ depending on $Z_j, j=1,2,3$, such that
\ben
|x_{ii}^{\eps}-x_{ii}|\leq c_2\eps\quad\mbox{and}\quad |y_{ii}^{\eps}-y_{ii}|\leq c_2\eps,\quad ii=A,B.
\enn
Therefore, we have
\ben
|Z^{\eps}-Z|\leq \sqrt{2}c_2\eps.
\enn
This implies that our phase retrieval scheme is Lipschitz stable with respect to the measurement noise level $\eps$.
}
\end{remark}

Using the phase retrieval scheme, we wish to approximately reconstruct $u_{D}^{\infty}$ from the knowledge of the perturbed phaseless data
$\left|u^{\infty,\eps}_{D\cup\{z_0\}}\right|$ with a known error level
\be\label{uinfeps}
\left|\big|u^{\infty,\eps}_{D\cup\{z_0\}}\big|-\big|u^{\infty}_{D\cup\{z_0\}}\big|\right|\leq \eps.
\en

\begin{theorem}\label{phaseretrieval-stability}
Let $\tau_j\in\C, j=1,2,3$ be three scattering strengths with different principle arguments and $\rho$ be the distance between the point $z_0$ and the unknown target $D$. Under the measurement error estimate \eqref{uinfeps}, we have
\be\label{uDinfestimate}
\left|u^{\infty,\eps}_{D}-u^{\infty}_{D}\right|\leq c_3\eps+O\left(\rho^{\frac{1-n}{2}}\right),
\en
for some constant $c_3>0$ depending only on $\tau_j, j=1,2,3$.
\end{theorem}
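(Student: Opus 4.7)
My strategy is to reduce the problem, pointwise in $(\hx,\hth)$, to the stable phase retrieval algorithm of Lemma \ref{phaseretrieval} and Remark \ref{ZStableRec}. First, applying Theorem \ref{weakinteraction} with each of the three scattering strengths $\tau_j$ gives
\ben
u^{\infty}_{D\cup\{z_0\}}(\hx,\hth,\tau_j)=u^{\infty}_{D}(\hx,\hth)+\tau_j e^{ikz_0\cdot(\hth-\hx)}+O\left(\rho^{\frac{1-n}{2}}\right),\quad j=1,2,3.
\enn
Setting $z:=u^{\infty}_{D}(\hx,\hth)$ and $z_j:=-\tau_j e^{ikz_0\cdot(\hth-\hx)}$, we see that $u^{\infty}_{D\cup\{z_0\}}(\hx,\hth,\tau_j)=z-z_j+O(\rho^{(1-n)/2})$. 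Combining the reverse triangle inequality with the measurement bound \eqref{uinfeps}, the measured moduli $r_j^{\eps}:=|u^{\infty,\eps}_{D\cup\{z_0\}}(\hx,\hth,\tau_j)|$ satisfy
\ben
\bigl|r_j^{\eps}-|z-z_j|\bigr|\leq \eps+O\left(\rho^{\frac{1-n}{2}}\right),\quad j=1,2,3.
\enn

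Second, I will verify the non-collinearity hypothesis of Lemma \ref{phaseretrieval}. The points $\{z_j\}$ are obtained from $\{-\tau_j\}$ by the rigid rotation $e^{ikz_0\cdot(\hth-\hx)}$, which preserves all mutual distances and angles; hence non-collinearity of $z_1,z_2,z_3$ is equivalent to that of $\tau_1,\tau_2,\tau_3$, which is guaranteed by the choice of three distinct principal arguments. The hypothesis of Lemma \ref{phaseretrieval} is therefore in force, and the phase retrieval scheme produces a point $z^{\eps}$ approximating $z$.

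Third, I will invoke the Lipschitz stability established in Remark \ref{ZStableRec} with effective noise level $\wi\eps:=\eps+O(\rho^{(1-n)/2})$. The crucial observation is that the constant $c_2$ appearing there depends only on the geometric configuration of $\{Z_1,Z_2,Z_3\}$, i.e., on the mutual distances $d_{i,j}$ and on $\cos\alpha$ from \eqref{cosalpha}; since $\{z_j\}$ differs from $\{-\tau_j\}$ only by a unitary rotation, $c_2$ depends solely on $\tau_1,\tau_2,\tau_3$ and is independent of $(\hx,\hth)$ and $z_0$. The stability estimate of Remark \ref{ZStableRec} therefore yields
\ben
|z^{\eps}-z|\leq \sqrt{2}\,c_2\,\wi\eps = c_3\eps+O\left(\rho^{\frac{1-n}{2}}\right),
\enn
uniformly in $(\hx,\hth)\in\Sp^{n-1}\times\Sp^{n-1}$, which is precisely \eqref{uDinfestimate}.

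The main obstacle is this uniformity of $c_3$ in $(\hx,\hth)$ and $z_0$, which does not follow from Remark \ref{ZStableRec} alone: a priori $c_2$ could degenerate as the three reference points move or as they approach a collinear configuration. The point of the rotation-invariance observation above is that the reference configuration is locked, up to a rigid rotation depending on $(\hx,\hth,z_0)$, to the fixed configuration $\{-\tau_1,-\tau_2,-\tau_3\}$ chosen once and for all, so none of the quantities $d_{i,j}$ or $\cos\alpha$ can drift. Apart from this observation, the remainder of the argument is a routine propagation of the two error sources $\eps$ (measurement) and $O(\rho^{(1-n)/2})$ (weak interaction from Theorem \ref{weakinteraction}) through the explicit formulas \eqref{xMyM}--\eqref{yB} underlying the phase retrieval scheme.
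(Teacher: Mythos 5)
Your proposal is correct and follows essentially the same route as the paper: decompose the far field via Theorem \ref{weakinteraction}, recognize the three moduli as distances to the non-collinear reference points $\tau_j e^{ikz_0\cdot(\hth-\hx)}$ (your sign convention $z_j=-\tau_j e^{ikz_0\cdot(\hth-\hx)}$ is an immaterial relabelling), and propagate the two error sources $\eps$ and $O\left(\rho^{\frac{1-n}{2}}\right)$ through the Lipschitz stability of Remark \ref{ZStableRec}. Your explicit rotation-invariance argument for why the constant $c_3$ depends only on $\tau_1,\tau_2,\tau_3$ and is uniform in $(\hx,\hth)$ and $z_0$ is a point the paper asserts without justification, so it is a welcome addition rather than a deviation.
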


\begin{proof}
Let
\ben
Z_j:=u^{\infty}_{z_0}(\hx,\hth,\tau_j)=\tau_je^{ikz_0\cdot(\hth-\hx)}, \quad j=1,2,3.
\enn
Then the assumption on the strengths implies that the three points $Z_j, j=1,2,3$ are not collinear. Define $r_j^{\eps}:=\left|u^{\infty,\eps}_{D\cup\{z_0\}}\right|$.
Using Theorem \ref{weakinteraction}, we have
\ben
r_j^{\eps}=\left|u^{\infty,\eps}_{D} + u^{\infty}_{z_0} + O\left(\rho^{\frac{1-n}{2}}\right)\right|,
\enn
where $\rho$ is the distance between the point $z_0$ and the unknown target $D$.
Then, following the arguments in Remark \ref{ZStableRec}, we have
\ben
\left|u^{\infty,\eps}_{D}-u^{\infty}_{D}+O\left(\rho^{\frac{1-n}{2}}\right)\right|\leq c_3\eps,
\enn
for some constant $c_3>0$ depending only on $\tau_j, j=1,2,3$. The stability estimate \eqref{uDinfestimate}
now follows by using the triangle inequality.
\end{proof}

Finally, we want to remark that Theorem \ref{phaseretrieval-stability} implies that our phase retrieval
scheme provides a stable method for the phase reconstruction.
This will also be verified by the numerical example in Section \ref{NumExamples}.

\subsection{Stability estimates for the inverse problems}

Stability of recovery of the scatterer is crucial for numerical algorithms. It has been proved in several
papers that the inverse problems with phased far field patterns are ill-posed. Stability was first
considered by Isakov \cite{Isakov92, Isakov93} for the determination of a sound-soft obstacle.
We refer to Potthast \cite{Potthast2000}, Cristo and Rondi \cite{CristoRondi} for the extension to
both the sound-soft and sound-hard obstacles. H$\ddot{a}$hner and Hohage \cite{HahnerHohage}
considered the stability estimate for the inhomogeneous medium case.

In this subsection, let $D_1, D_2$ be two scatterers, $\rho$ be the distance between the point $z_0$
and the unknown obstacles $D_1\cup D_2$ and $\tau_j\in\C, j=1,2,3,$ be three scattering strengths
with different principal arguments.
Let $|u^{\infty}_{D_1\cup\{z_0\}}|, |u^{\infty}_{D_2\cup\{z_0\}}|$ be the corresponding phaseless
far field patterns. Following the same arguments in Theorem \ref{phaseretrieval-stability},
we have the following stability estimate.

\begin{theorem}\label{phasestability}
If
\be\label{phaselessinf12}
\left||u^{\infty}_{D_1\cup\{z_0\}}(\hx,\hth,\tau)|-|u^{\infty}_{D_2\cup\{z_0\}}(\hx,\hth,\tau)|\right|<\eps,
\quad \forall \hx, \hth\in \Sp^{n-1}, \tau\in\{\tau_1, \tau_2,\tau_3\},
\en
then for sufficiently large $\rho$ we have
\be\label{phaseinf12}
\left|u^{\infty}_{D_1}(\hx,\hth)-u^{\infty}_{D_2}(\hx,\hth)\right|<c_3\eps + O\left(\rho^{\frac{1-n}{2}}\right),
\quad \forall \hx, \hth\in \Sp^{n-1}.
\en
where $c_3>0$ is a constant depending only on $\tau_j, j=1,2,3$.
\end{theorem}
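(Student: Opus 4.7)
The plan is to reduce the claim to a direct application of the phase retrieval stability machinery already developed in Lemma \ref{phaseretrieval}, Remark \ref{ZStableRec}, and Theorem \ref{phaseretrieval-stability}, treating the two unknown far-field values $u^{\infty}_{D_1}(\hx,\hth)$ and $u^{\infty}_{D_2}(\hx,\hth)$ as two unknown points in $\C$ that both must be reconstructed from noisy distances to the same three anchor points.

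First, I would fix $\hx,\hth\in\Sp^{n-1}$ and set
\ben
Z_j := u^{\infty}_{z_0}(\hx,\hth,\tau_j) = \tau_j e^{ikz_0\cdot(\hth-\hx)},\qquad j=1,2,3.
\enn
Because the $\tau_j$ have pairwise distinct principal arguments and the common factor $e^{ikz_0\cdot(\hth-\hx)}$ is a unit complex number, the three points $Z_1,Z_2,Z_3$ are non-collinear, so Lemma \ref{phaseretrieval} applies to any complex number whose distances to them are prescribed. By Theorem \ref{weakinteraction},
\ben
u^{\infty}_{D_m\cup\{z_0\}}(\hx,\hth,\tau_j) = u^{\infty}_{D_m}(\hx,\hth) + Z_j + O\!\left(\rho^{\frac{1-n}{2}}\right),\qquad m=1,2,\;\; j=1,2,3,
\enn
so the moduli $r^{(m)}_j := |u^{\infty}_{D_m\cup\{z_0\}}(\hx,\hth,\tau_j)|$ are, up to an $O(\rho^{(1-n)/2})$ perturbation, the Euclidean distances from the unknown complex number $u^{\infty}_{D_m}(\hx,\hth)$ to the anchor $Z_j$.

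Next, I would exploit the hypothesis \eqref{phaselessinf12}: for each $j$, the moduli $r^{(1)}_j$ and $r^{(2)}_j$ differ by at most $\eps$. View this as the data of the phase retrieval scheme for $u^{\infty}_{D_1}(\hx,\hth)$ being an $\eps$-perturbation of the data for $u^{\infty}_{D_2}(\hx,\hth)$ (modulo the same $O(\rho^{(1-n)/2})$ remainder inherited from Theorem \ref{weakinteraction} applied twice). Inserting these perturbed distances into the explicit formulas \eqref{xMyM}--\eqref{yB} and repeating the Lipschitz estimates of Remark \ref{ZStableRec}, one obtains a constant $c_3>0$, depending only on the geometric configuration of $Z_1,Z_2,Z_3$ (hence only on $\tau_1,\tau_2,\tau_3$), such that
\ben
\left|u^{\infty}_{D_1}(\hx,\hth)-u^{\infty}_{D_2}(\hx,\hth)\right| \leq c_3\eps + O\!\left(\rho^{\frac{1-n}{2}}\right),
\enn
which is exactly \eqref{phaseinf12}. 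A final application of the triangle inequality absorbs the two asymptotic remainders from the two instances of Theorem \ref{weakinteraction} into a single $O(\rho^{(1-n)/2})$ term.

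The only genuine subtlety, and the step I expect to require a little care, is that the stability constant in Remark \ref{ZStableRec} depends on the geometry of the anchors $Z_j$; here those anchors carry the unit factor $e^{ikz_0\cdot(\hth-\hx)}$ which merely rotates the configuration $\{\tau_1,\tau_2,\tau_3\}$ rigidly. Hence the mutual distances $|Z_i-Z_j| = |\tau_i-\tau_j|$ and the cosines in \eqref{cosalpha} are independent of $(\hx,\hth,z_0)$, which guarantees that $c_3$ can be taken uniformly in $\hx,\hth\in\Sp^{n-1}$ and depends only on $\tau_1,\tau_2,\tau_3$. Once this uniformity is observed, the estimate \eqref{phaseinf12} holds pointwise for all $\hx,\hth$ and the proof is complete.
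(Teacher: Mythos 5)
Your proposal is correct and follows essentially the same route as the paper, which proves Theorem \ref{phasestability} simply by invoking ``the same arguments as in Theorem \ref{phaseretrieval-stability}'', i.e.\ applying Theorem \ref{weakinteraction} to both scatterers and then the Lipschitz stability of the phase retrieval map from Remark \ref{ZStableRec} with the data for $D_1$ viewed as an $\eps$-perturbation of the data for $D_2$. Your closing observation that the anchors $Z_j=\tau_j e^{ikz_0\cdot(\hth-\hx)}$ are a rigid rotation of $\{\tau_1,\tau_2,\tau_3\}$, so that $c_3$ is uniform in $\hx,\hth$, is a point the paper leaves implicit and is worth stating.
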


Combining Theorem \ref{phasestability} and Theorem 1 in \cite{Isakov92}, Theorem 15 in \cite{Potthast2000}
and Theorem 1.2 in \cite{HahnerHohage}, respectively, we immediately obtain the following stability
estimate for scatterer reconstruction with phaseless far field patterns.

\begin{theorem}
Denote by $\delta=\delta(\eps,\rho)$ the right-hand side of \eqref{phaseinf12}. The following scatterer
reconstruction stabilities hold.
\begin{enumerate}[(1)]
\item Assume that $D_m=\{x\in\R^3: |x|<r_m(\theta)\}$ is star-shaped with
\ben
\|r_m\|_{C^{2,\alpha}(S^2)}<1/R_1, \quad 1/R_1< r_m < R_0, \quad m=1,2.
\enn
For $k<\pi/R_0$, if \eqref{phaselessinf12} holds for any fixed $\hth\in S^2$, then the Hausdorff
distance between $D_1$ and $D_2$ satisfies that
\ben
dist(D_1, D_2)< C\left(ln\Big(-ln(\delta)\Big)\right)^{-1/C},
\enn
where $C$ is a constant depending only on $R_1$.
\item Assume that $D_m\subset B_R(0), m=1,2,$ are sound-soft or sound-hard obstacle with $C^2$ boundary
  satisfying the exterior cone condition with angle $\beta$. If \eqref{phaselessinf12} holds for
  all $\hx,\hth\in S^{n-1}$, then the Hausdorff distance between the convex hulls
  $\mathscr{H}(D_1)$ and $\mathscr{H}(D_2)$ satisfies the estimate
  \ben
  dist(\mathscr{H}(D_1), \mathscr{H}(D_2))\leq \frac{C}{\left|ln(\delta)\right|^\alpha},
  \enn
  where the constants $C>0$ and $0<\alpha<1$ uniformly depend only on $R$ and $\beta$.
  \item Assume that $q_m\in H^s(\R^3)$ for some fixed $s>3/2$, $supp(q_m)\subset B_1$ and
  $\|q_m\|_{H^s}< C_q$ for some fixed constant $C_q>0$, $m=1,2$. For any fixed constant
  $\eps_0\in\left(0,({2s-3})/({2s+3})\right)$, the maximum norm of $q_1-q_2$ can be estimated as
  \ben
  \|q_1-q_2\|_{\infty}\leq C\left(-\widetilde{ln}(16\pi^2\delta)\right)^{\eps_0-({2s-3})/({2s+3})},
  \enn
where $C$ depends only on $C_q, \eps_0$, and $\widetilde{ln}(t):= ln(t)$ for $t<1/e$ and
$\widetilde{ln}(t):=-1$ otherwise.
\end{enumerate}
\end{theorem}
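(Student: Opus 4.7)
The plan is to recognize that this theorem is a direct corollary: combine the phase-retrieval stability in Theorem~\ref{phasestability} with three well-known stability results for inverse scattering with \emph{phased} far-field data due to Isakov, Potthast, and H\"ahner--Hohage, respectively.

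First I would observe that Theorem~\ref{phasestability} already converts the phaseless hypothesis \eqref{phaselessinf12} into the phased estimate
\begin{equation*}
\left|u^{\infty}_{D_1}(\hx,\hth)-u^{\infty}_{D_2}(\hx,\hth)\right|<\delta,\quad \forall\,\hx,\hth\in\Sp^{n-1},
\end{equation*}
where $\delta=c_3\eps+O\bigl(\rho^{(1-n)/2}\bigr)$ is precisely the quantity named in the statement. Thus all three parts reduce to feeding this phased bound into an existing stability theorem and reading off the resulting estimate for the scatterer.

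For part (1), I would invoke Theorem~1 of \cite{Isakov92}: under the star-shapedness assumption $\|r_m\|_{C^{2,\alpha}(S^2)}<1/R_1$, the two-sided bound $1/R_1<r_m<R_0$, and $k<\pi/R_0$, that result yields a double-logarithmic estimate of $\mathrm{dist}(D_1,D_2)$ in terms of the phased far-field discrepancy at a single incident direction $\hth$, which is exactly the asserted bound with $C=C(R_1)$. For part (2), I would apply Theorem~15 of \cite{Potthast2000}, valid for sound-soft or sound-hard $C^2$ obstacles contained in $B_R(0)$ satisfying the exterior cone condition with angle $\beta$; that theorem yields the single-logarithmic estimate for the Hausdorff distance between the convex hulls $\mathscr{H}(D_1)$ and $\mathscr{H}(D_2)$ with constants depending only on $R$ and $\beta$. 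For part (3), I would apply Theorem~1.2 of \cite{HahnerHohage} to the inhomogeneous-medium case: with $q_m\in H^s(\R^3)$, $\mathrm{supp}(q_m)\subset B_1$, $\|q_m\|_{H^s}<C_q$, and any fixed $\eps_0\in(0,(2s-3)/(2s+3))$, one directly obtains the stated $L^\infty$ estimate in terms of $\delta$.

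The only conceptual point that needs checking --- and the main potential pitfall --- is that the phased bound now contains an \emph{additive} remainder $O(\rho^{(1-n)/2})$. However, each of the cited stability theorems is stated with respect to a generic error level on the phased far-field data, so for $\rho$ sufficiently large (as already assumed in Theorem~\ref{phasestability}) this remainder is simply absorbed into $\delta$, and no modification of the Isakov / Potthast / H\"ahner--Hohage arguments is required. The proof is therefore the short chain
\begin{equation*}
\textrm{phaseless stability }\eqref{phaselessinf12}\ \Longrightarrow\ \textrm{phased stability via Theorem~\ref{phasestability}}\ \Longrightarrow\ \textrm{geometric/coefficient stability from the cited theorems},
\end{equation*}
and the stated constants and rates carry over unchanged.
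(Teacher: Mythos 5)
Your proposal matches the paper exactly: the authors also obtain this theorem as an immediate corollary by combining Theorem~\ref{phasestability} with Theorem~1 of \cite{Isakov92}, Theorem~15 of \cite{Potthast2000}, and Theorem~1.2 of \cite{HahnerHohage}, with $\delta=\delta(\eps,\rho)$ playing the role of the phased far-field error level. Your extra remark that the additive $O\bigl(\rho^{(1-n)/2}\bigr)$ remainder is simply absorbed into $\delta$ is a sensible clarification of a point the paper leaves implicit.
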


\section{Direct sampling methods}\label{DSMs}
\setcounter{equation}{0}

In this section, we investigate the numerical method for reconstruction of $D$ by using phaseless
far field data $\big|u^{\infty}_{D\cup\{z_0\}}\big|$. We will focus on designing a direct sampling method
which do not need any a priori information on the geometry and physical properties of the obstacle.
Roughly speaking, a direct sampling method chooses an appropriate indicator function $I(z),\;z\in\R^n$,
such that its value has an obvious change across the boundary of the scatterers.

We first introduce two auxiliary functions
\be\label{GA}
G(z,\hth):=\int_{S^{n-1}}u^{\infty}_{D}(\hx,\hth)e^{ik\hx\cdot z}ds(\hx)\;\;\;
A(z):=\int_{S^{n-1}}G(z,\hth)e^{-ik\hth\cdot\,z}ds(\hth),\quad\,z\in\R^n.
\en
By the well-known Riemann-Lebesgue Lemma, both $G$ and $A$ tend to $0$ as $|z|\rightarrow\infty$.
For the scattering problems \eqref{HemEqumedium}-\eqref{Srcmedium} and  \eqref{HemEquobstacle}-\eqref{Srcobstacle},
it is well known that the far field pattern $u^{\infty}_{D}$ has the following form (cf. \cite{KirschGrinberg})
\ben
u^{\infty}_{D}(\hx,\hth)=\int_{\pa D}\left\{u^s(y,\hth)\frac{\pa e^{-ik\hx\cdot y}}{\pa\nu(y)}
-\frac{\pa u^s(y,\hth)}{\pa\nu}e^{-ik\hx\cdot y}\right\}ds(y),\quad (\hx,\hth)\in\Sp^{n-1}.
\enn
Inserting this into \eqref{GA}, integrating by parts and using the well-known Funk-Hecke formula \cite{CK,LiuIP17},
we deduce that
\be\label{G}
&&G(z,\hth)\cr
&&=\int_{\Sp^{n-1}}\int_{\pa D}\left\{u^s(y,\hth)\frac{\pa e^{-ik\hx\cdot (y-z)}}{\pa\nu(y)}
-\frac{\pa u^s(y,\hth)}{\pa\nu}e^{-ik\hx\cdot(y-z)}\right\}ds(y)ds(\hx)\cr
&&=\int_{\pa D}\left\{-iku^s(y,\hth)\nu(y)\cdot\int_{\Sp^{n-1}}\hx e^{-ik\hx\cdot (y-z)}ds(\hx)
-\frac{\pa u^s(y,\hth)}{\pa\nu}\int_{\Sp^{n-1}}e^{-ik\hx\cdot(y-z)}ds(\hx)\right\}ds(y)\cr
&&=\int_{\pa D}\left\{-ik\mu_1u^s(y,\hth)\nu(y)\cdot\frac{y-z}{|y-z|}f_1(k|y-z|)
-\mu_0\frac{\pa u^s(y,\hth)}{\pa\nu}f_0(k|y-z|)\right\}ds(y),
\en
where
\ben
\mu_{\alpha}=\left\{
               \begin{array}{ll}
                 {2\pi}{i^{-\alpha}}, & \hbox{$n=2$,} \\
                 {4\pi}{i^{-\alpha}}, & \hbox{$n=3$}
               \end{array}
             \right.
\qquad\mbox{and}\qquad
f_{\alpha}(t)=\left\{
               \begin{array}{ll}
                 J_{\alpha}(t), & \hbox{$n=2$,} \\
                 j_{\alpha}(t), & \hbox{$n=3$}
               \end{array}
             \right.
\enn
with $J_{\alpha}$ and $j_{\alpha}$ being the Bessel functions and spherical Bessel functions of order $\alpha$,
respectively. This implies that $G$ is a superposition of the Bessel functions $f_0$ and $f_1$.
We thus expect that $G$ (and therefore $A$) decays like Bessel functions as the sampling points away from
the boundary of the scatterer.

Then one may look for the scatterers by using the following indicators \cite{LiZou, LiuIP17, Potthast2010}
with phased far field patterns,
\be\label{indicator23}
{\bf I_2}(z)=|A(z)| \quad\mbox{and}\quad {\bf I_3}(z,\hth)=|G(z,\hth)|,
\en
where $A$ and $G$ are given in \eqref{GA}. In \cite{LiuIP17}, it has been showed that the indicator ${\bf I_2}$
has a positive lower bound for sampling points
inside the scatterer, and decays like Bessel functions as the sampling points away from the boundary.
If the size of the scatterer $D$ is small enough (compared with the wavelength),
${\bf I_3}$ takes its local maximum at the location of the scatterer \cite{LiZou,Potthast2010}.

Consider now the case of phaseless far field measurements.
Using \eqref{uinfz0} and \eqref{uinfweaksca}, we have
\be\label{F}
&&\mathcal {F}(\hx,\hth,z_0,\tau)\cr
&:=&|u^{\infty}_{D\cup\{z_0\}}(\hx, \hth, \tau)|^2-|u^{\infty}_{D}(\hx, \hth)|^2-|\tau|^2\cr
&=&\Big|u^{\infty}_{D}(\hx, \hth)+u^{\infty}_{z_0}(\hx,\hth,\tau)
+ O\Big(\rho^{\frac{1-n}{2}}\Big)\Big|^2-|u^{\infty}_{D}(\hx, \hth)|^2-|\tau|^2\cr
&=&u^{\infty}_{D}(\hx, \hth)\ov{\tau}e^{-ikz_0\cdot(\hth-\hx)}
+\ov{u^{\infty}_{D}(\hx,\hth)}\tau e^{ikz_0\cdot(\hth-\hx)}
+ O\Big(\rho^{\frac{1-n}{2}}\Big), \quad (\hx,\hth)\in\Sp^{n-1},\,\tau\in\C.
\en
Denote by $\Theta$ a finite set with finitely many incident directions as elements.
Then, for any fixed $\tau\in \C\ba\{0\}$ and $z_0\in\R^n\ba\ov{D}$, we introduce the following
two indicators
\be
\label{Indicator01}{\bf I^{\Theta}_{z_0}}(z)&:=&\left|\sum_{\hth\in\Theta}\int_{\Sp^{n-1}} \mathcal {F}(\hx,\hth,z_0,\tau)\cos[k\hx\cdot(z-z_0)]ds(\hx)\right|,\quad\,z\in\R^n, \\
\label{Indicator0f}{\bf I_{z_0}}(z)&:=&\Big|\int_{\Sp^{n-1}}\int_{\Sp^{n-1}}
\mathcal {F}(\hx,\hth,z_0,\tau)\cos[k(\hx-\hth)\cdot(z-z_0)]ds(\hx)ds(\hth)\Big|,\quad\,z\in\R^n.\quad
\en
Insert \eqref{F} into \eqref{Indicator01}-\eqref{Indicator0f}. Then a straightforward calculation shows that
\ben
{\bf I^{\Theta}_{z_0}}(z)=\left|\sum_{\hth\in\Theta}( V_{z_0}(z,\hth)+\ov{V_{z_0}(z,\hth)}) \right|+ O\Big(\rho^{\frac{1-n}{2}}\Big)
\quad\mbox{and}\quad {\bf I_{z_0}}(z)=\big| W_{z_0}(z)+\ov{W_{z_0}(z)} \big|+ O\Big(\rho^{\frac{1-n}{2}}\Big)
\enn
with
\ben
V_{z_0}(z,\hth):=\frac{\ov{\tau}e^{-ik\hth\cdot z_0}}{2}\Big[G(z,\hth)+G(2z_0-z,\hth)\Big]
\,\,\mbox{and}\,\,
W_{z_0}(z):=\frac{\ov{\tau}}{2}\Big[A(z)+A(2z_0-z)\Big],\, z\in\R^n.
\enn
Let $D(z_0)$ be the point symmetric domain of $D$ with respect to $z_0$.
If the size of the scatterer $D$ is small enough, from \eqref{G} we expect that the indicator
${\bf I^{\Theta}_{z_0}}$ takes its local maximum on the locations of $D$ and $D(z_0)$.
For extended scatterer $D$, from the behavior of the indicator $A$ we expect that
the indicator ${\bf I_{z_0}}$ takes its maximum on or near the boundary $\pa D\cup\pa D(z_0)$.

Note that the indicator ${\bf I^{\Theta}_{z_0}}$/${\bf I_{z_0}}$ produces a false scatterer $D(z_0)$.
However, since we have the freedom to choose the point $z_0$, we can always choose $z_0$ such that
the false domain $D(z_0)$ located outside our interested searching domain.
One may also overcome this problem by considering another indicator
${\bf I^{\Theta}_{z_1}}$/${\bf I_{z_1}}$ with $z_1\in \R^n\ba\ov{D}$ and $z_1\neq z_0$.

{\bf Scatterer Reconstruction Scheme One.}
\begin{enumerate}[(1)]{\em
  \item Collect the phaseless data set $\big\{|u^{\infty}_{D\cup\{z_0\}}(\hx,\hth,\tau)|:\,
  (\hx,\hth)\in\Sp^{n-1}, \tau\in\{0,\tau_1\}\big\}$.
  \item Select a sampling region in $\R^{n}$ with a fine mesh $\mathcal {T}$ containing the scatterer $D$,
  \item Compute the indicator functional ${\bf I_{z_0}}(z)$ (or ${\bf I^{\Theta}_{z_0}}$ in
  the case of small scatterers) for all sampling point $z\in\mathcal{T}$,
  \item Plot the indicator functional ${\bf I_{z_0}}(z)$ (or ${\bf I^{\Theta}_{z_0}}$
  in the case of small scatterers).}
\end{enumerate}

Using the Phase Retrieval Scheme proposed in the previous section, we obtain the approximate phased far field pattern $u_{D}^{\infty}$. Then we have the second scatterer reconstruction algorithm.

{\bf Scatterer Reconstruction Scheme Two.}
\begin{enumerate}[(1)]{\em
  \item Collect the phaseless data set
  $\big\{|u^{\infty}_{D\cup\{z_0\}}(\hx,\hth,\tau)|:\,(\hx,\hth)\in\Sp^{n-1},\tau\in\{\tau_1,\tau_2,\tau_3\}\big\},$
  \item Use the {\bf Phase Retrieval Scheme} to obtain the phased far field patterns $u^{\infty}_{D}(\hx, \hth)$
  for all $(\hx,\hth)\in\Sp^{n-1}$,
  \item Select a sampling region in $\R^{n}$ with a fine mesh $\mathcal{T}$ containing $D$,
  \item Compute the indicator functional ${\bf I_2}(z)$ (or ${\bf I_3}(z,\hth)$ in the case of small
  scatterers with a fixed incident direction $\hth$) for all sampling point $z\in\mathcal {T}$,
  \item Plot the indicator functional ${\bf I_2}(z)$ (or ${\bf I_3}(z,\hth)$ in the case of small scatterers).}
\end{enumerate}

\section{Numerical examples and discussions}
\label{NumExamples}
\setcounter{equation}{0}

Now we present a variety of numerical examples in two dimensions to illustrate the applicability
and effectiveness of our sampling methods.
There are totally nine groups of numerical tests to be considered, and they are
respectively referred to as {\bf ${\bf I_{z_0}}$-Soft, ${\bf I_{z_0}}$-Multiple,
${\bf I_{z_0}}$-MultiScalar, ${\bf I^{\Theta}_{z_0}}$-Small, PhaseRetrieval,
${\bf I_{2}}$-Soft, ${\bf I_{2}}$-Multiple, ${\bf I_{2}}$-MultiScalar,}
and {\bf ${\bf I_{3}}$-Small}.
The boundaries of the scatterers used in our numerical experiments are parameterized as follows
\be
\label{kite}&\mbox{\rm Kite:}&\quad x(t)\ =(a,b)+\ (\cos t+0.65\cos 2t-0.65, 1.5\sin t),\quad 0\leq t\leq2\pi,\\
\label{peanut}&\mbox{\rm Peanut:}&\quad x(t)\ =(a,b)+\, 2\sqrt{3\cos^2 t+1}(\cos t, \sin t),\quad 0\leq t\leq2\pi,\\
\label{pear}&\mbox{\rm Pear:}&\quad x(t)\ =(a,b)+\, (2+0.3\cos 3t)(\cos t, \sin t),\quad 0\leq t\leq2\pi,\\
\label{circle}&\mbox{\rm Circle:}&\quad x(t)\ =(a,b)+r\, (\cos t, \sin t),\quad 0\leq t\leq2\pi,\quad
\en
with $(a,b)$ be the location of the scatterer which may be different in different examples
and $r$ be the radius of the circle.

\begin{figure}[htbp]
  \centering
  \subfigure[\textbf{Kite}]{
    \includegraphics[width=1.4in]{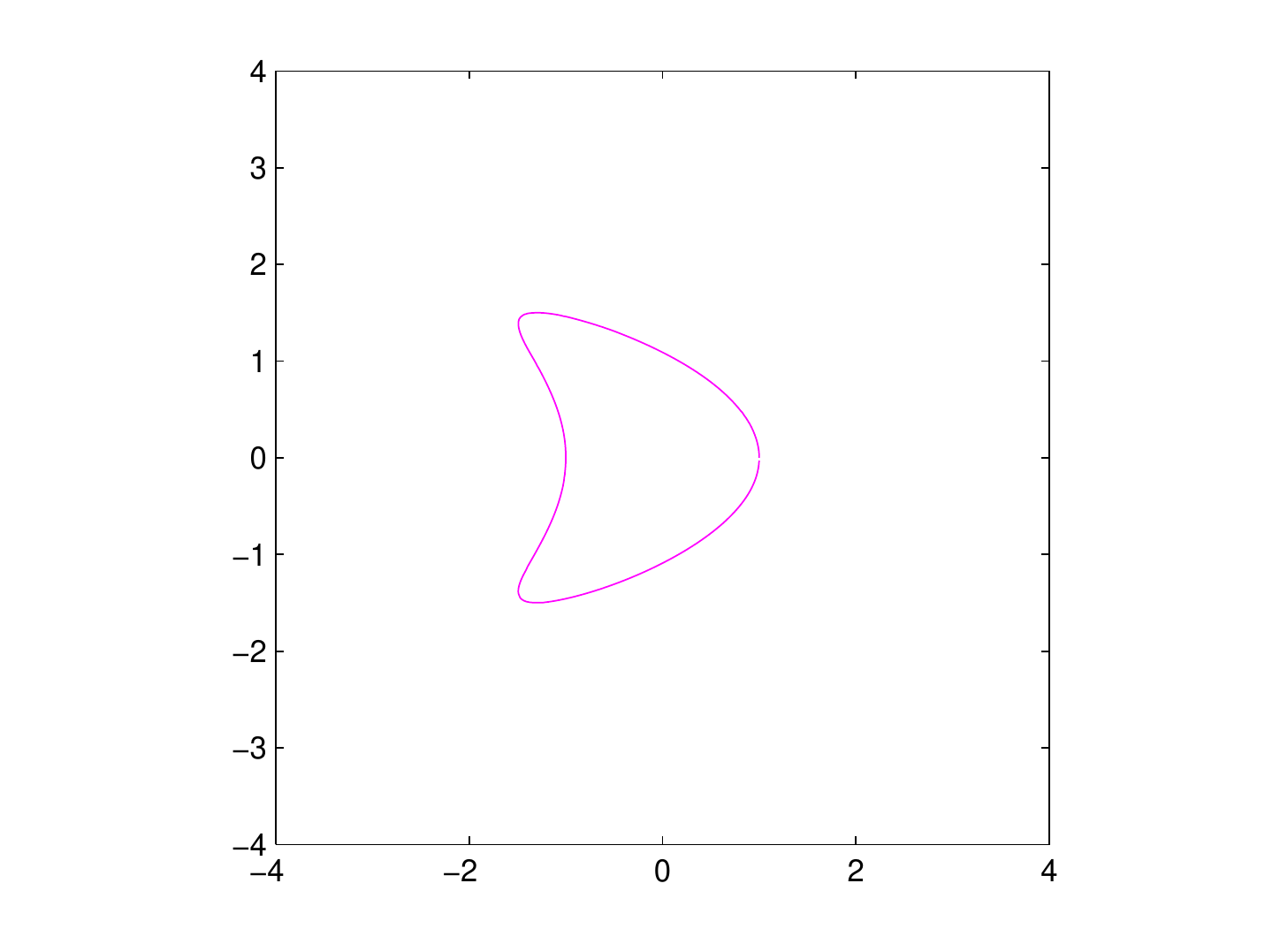}}
  \subfigure[\textbf{Peanut}]{
    \includegraphics[width=1.4in]{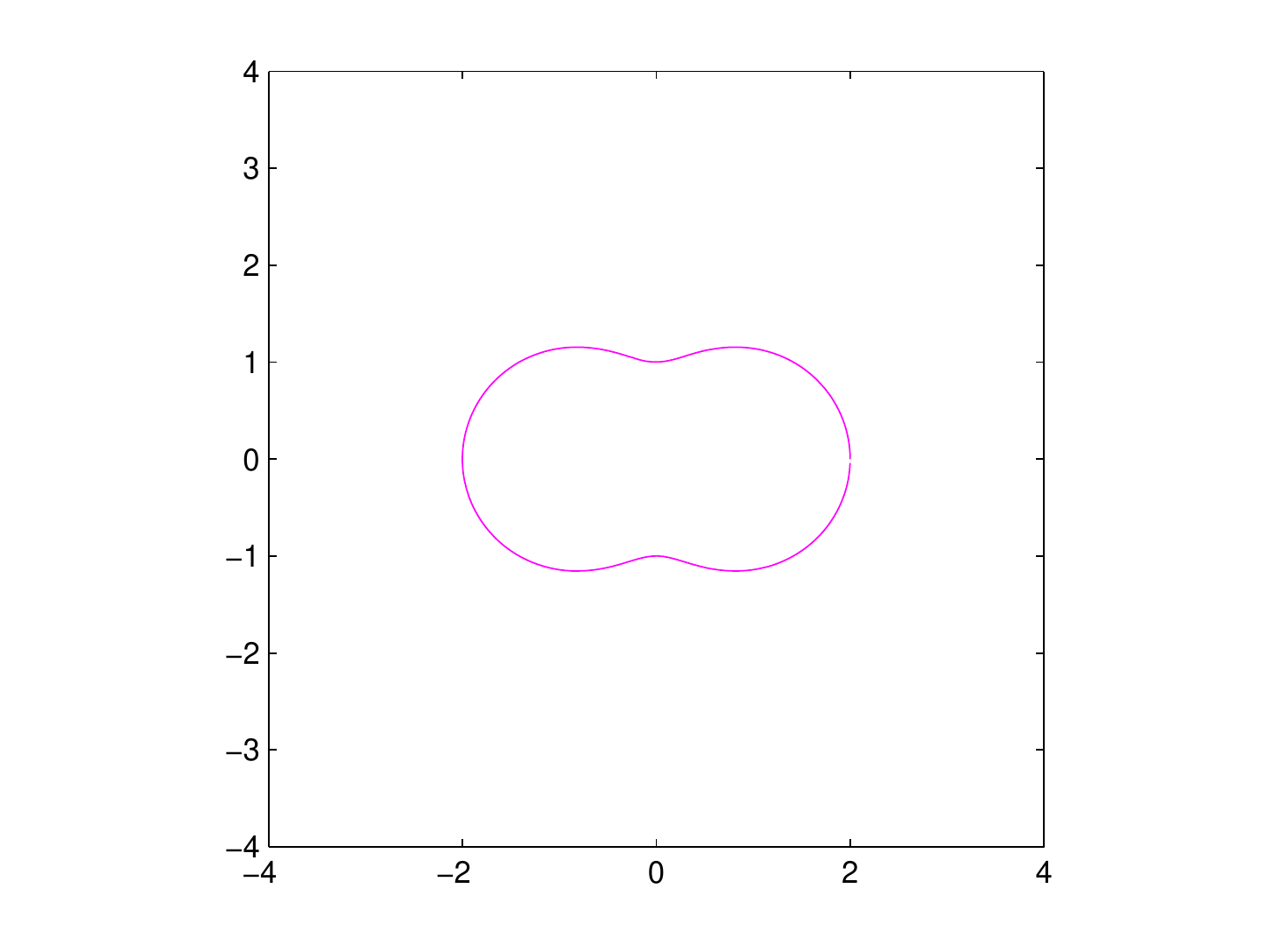}}
  \subfigure[\textbf{Pear}]{
    \includegraphics[width=1.4in]{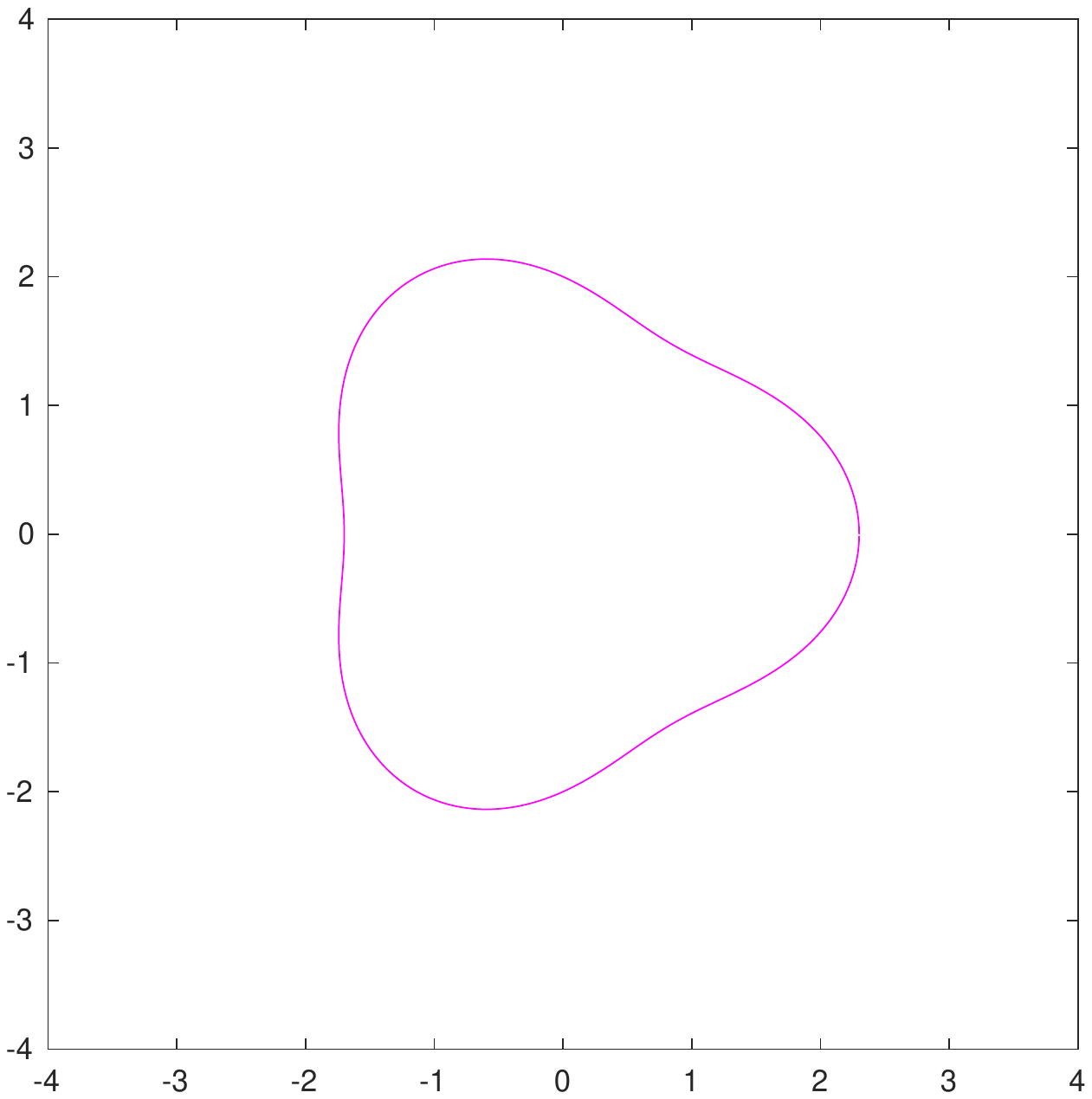}}
  \subfigure[\textbf{Circle}]{
  \includegraphics[width=1.4in]{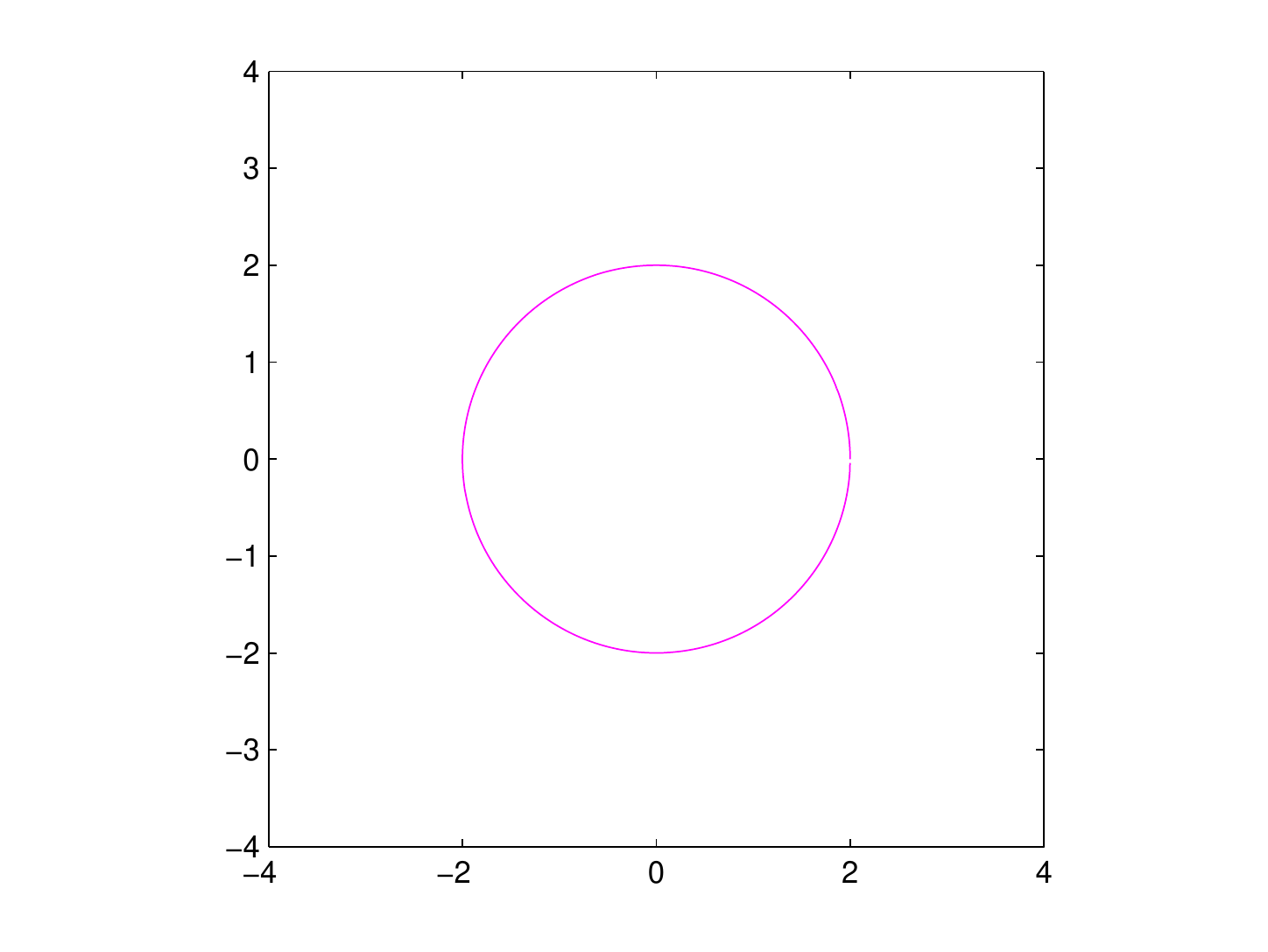}}
\caption{{\bf Different shapes to be used in the later examples.}}
\label{truedomains}
\end{figure}

Define $\theta_m:=2\pi m/N,\,m=0,1,\cdots,N-1$, let $\hth_l=(\cos\theta_l,\sin\theta_l)$ and $\hx_j=(\cos\theta_j, \sin\theta_j)$ for $j,l=0,1,\cdots, N-1$. In our simulations, we use the boundary integral
equation method to compute the far field patterns $u_{D\cup\{z_0\}}^\infty(\hx_j, \hth_l, \tau)$, $j,l=0,1,\cdots, N-1$,
for $N$ equidistantly distributed incident directions and $N$ observation directions.
We further perturb this data by random noise
\ben
\Big|u_{D\cup\{z_0\}}^{\infty,\delta}(\hx_j, \hth_l,\tau)\Big| = |u_{D\cup\{z_0\}}^{\infty}(\hx_j, \hth_l,\tau)| (1+\delta*e_{rel}), \quad j,l=0,1,\cdots, N-1,
\enn
where $e_{rel}$ is a uniformly distributed random number in the open interval $(-1,1)$.
The value of $\delta$ used in our code is the relative error level.
We also consider absolute error in {\bf Example PhaseRetrieval}. In this case, we perturb the phaseless data
\ben
\Big|u_{D\cup\{z_0\}}^{\infty,\delta}(\hx_j, \hth_l,\tau)\Big| = \max\Big\{0,\,|u_{D\cup\{z_0\}}^{\infty}(\hx_j, \hth_l,\tau)|+\delta*e_{abs}\Big\},\quad j,l=0,1,\cdots, N-1,
\enn
where $e_{abs}$ is again a uniformly distributed random number in the open interval $(-1,1)$.
Here, the value $\delta$ denotes the total error level in the measured data.

In the simulations, we use $0.05$ as the sampling space and $N=512, k=8$. If not otherwise stated,
we take $z_0=(12,12)$.

In the first four examples, we consider the indicators ${\bf I_{z_0}}$ and ${\bf I^{\Theta}_{z_0}}$ given by \eqref{Indicator0f} and \eqref{Indicator01}, respectively, with $\tau=1$.

\textbf{Example ${\bf I_{z_0}}$-Soft}. This example checks the validity of our method for scatterers with
different reference points. For simplicity, we impose Dirichlet boundary condition on the underlying
scatterer.  The scatterer is a kite with $(a,b)=(0,0)$.
Figure \ref{softkite8} shows the results with $10\%$ noise and three reference points $z_0=(2,4), z_0=(4,4)$
and $z_0=(12,12)$.
As expected, the indicator ${\bf I_{z_0}}$ takes a large value on $\pa D\cup\pa D(z_0)$, where $D(z_0)$
is the symmetric domain of $D$ about the reference point $z_0$. The symmetric domain of $z_0=(12,12)$
is outside of the sampling space. Note that $D(z_0)$ changes as the reference point $z_0$ changes,
thus it is very easy to pick the correct domain $D$ by considering the indicator ${\bf I_{z_0}}$
with different reference points, or we can just choose $z_0$ far enough.
As shown in Figures \ref{softkite8}, the left hand scatterer should be the one searched. \\

\begin{figure}[htbp]
  \centering
  \subfigure[\textbf{$z_0=(2,4)$.}]{
    \includegraphics[width=2in]{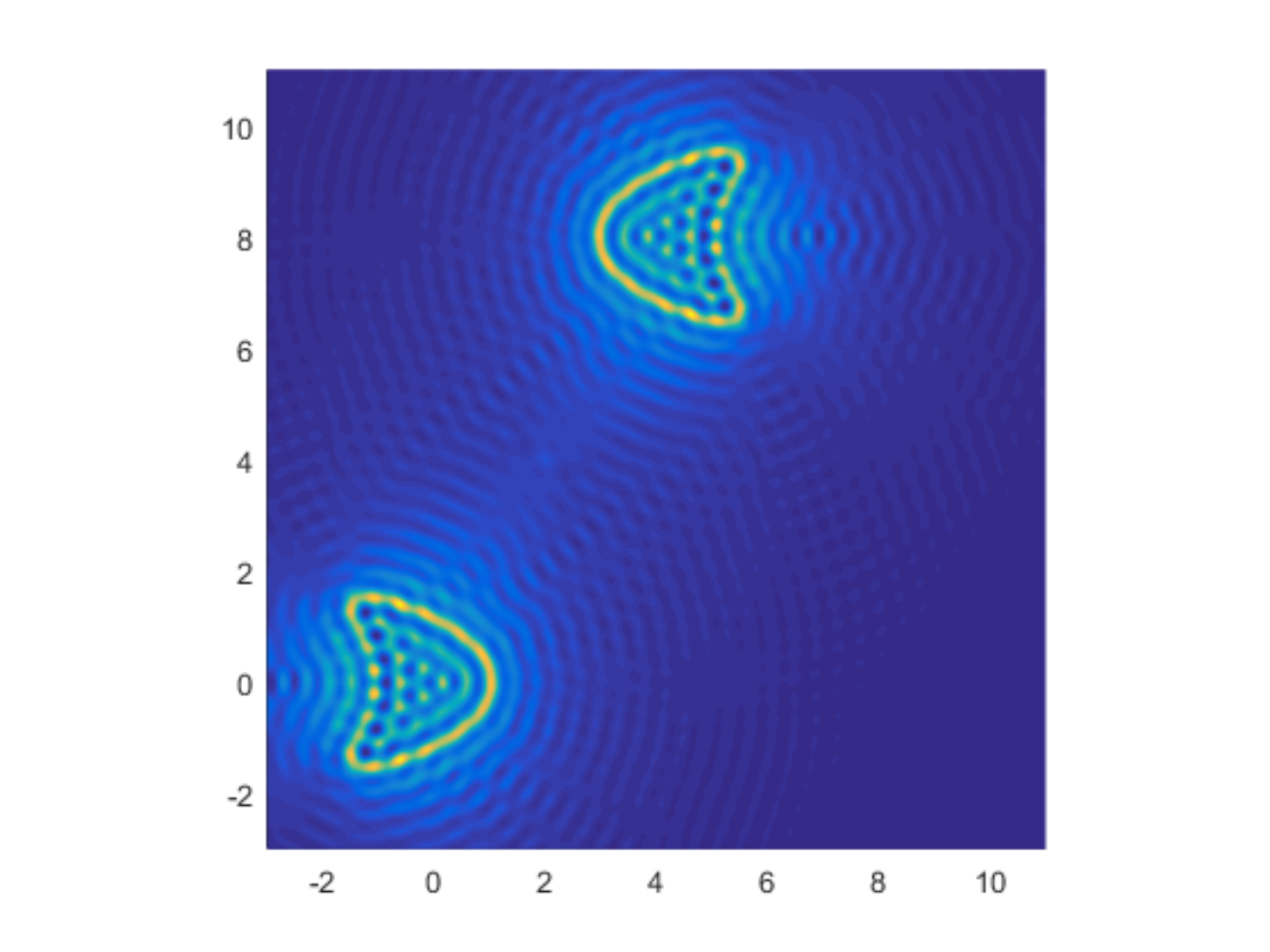}}
  \subfigure[\textbf{$z_0=(4,4)$.}]{
    \includegraphics[width=2in]{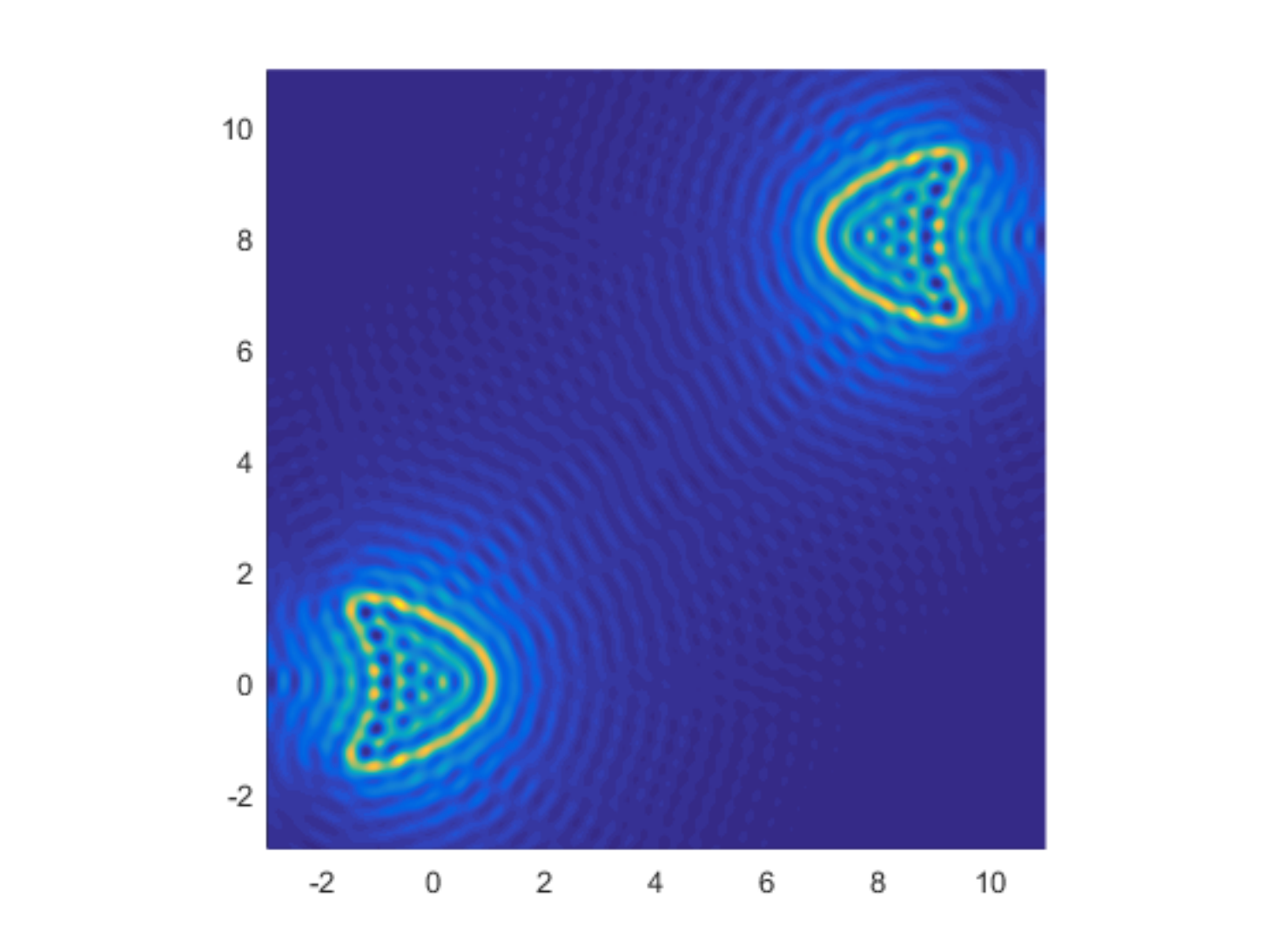}}
  \subfigure[\textbf{$z_0=(12,12)$.}]{
    \includegraphics[width=2in]{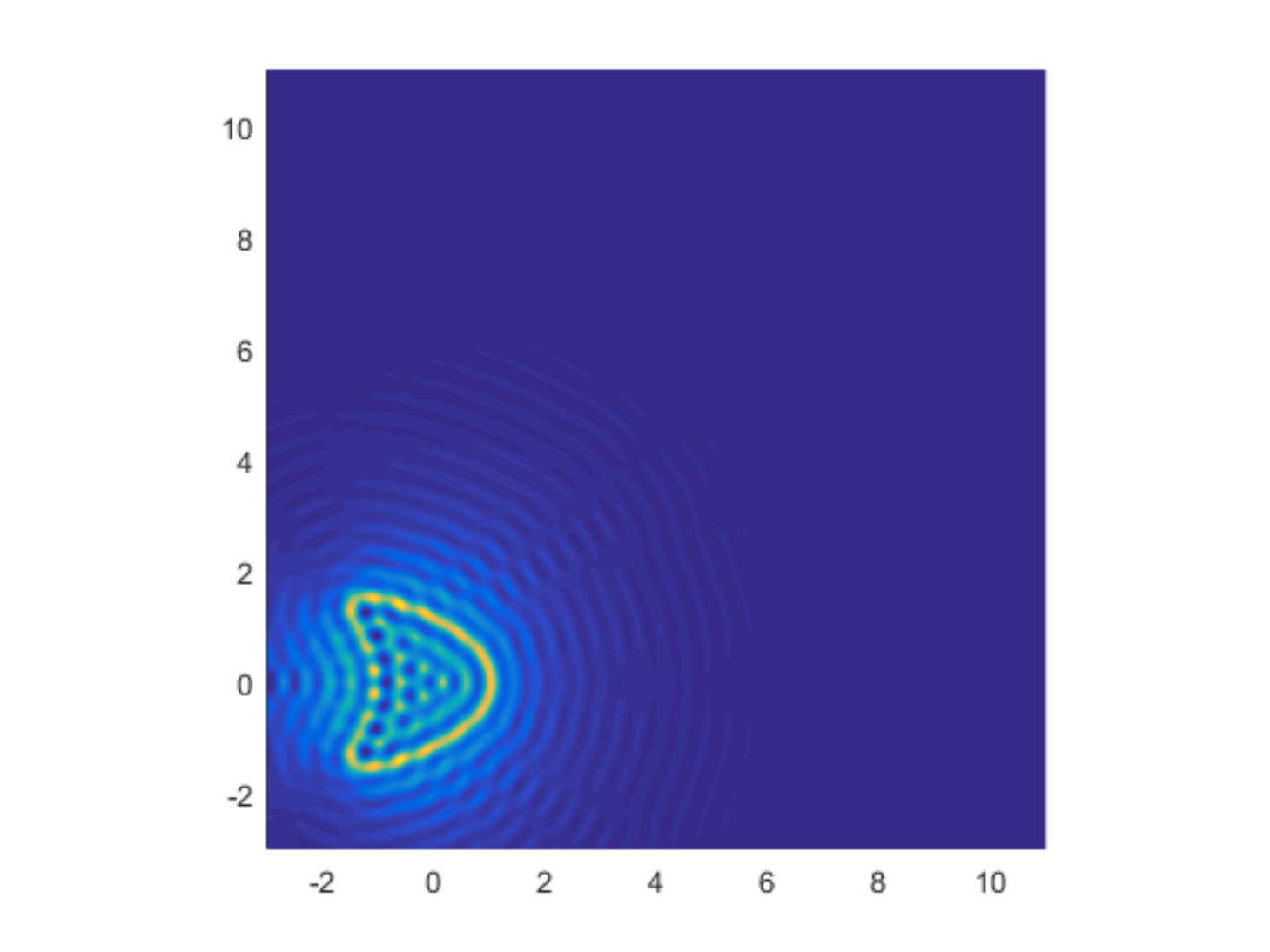}}
\caption{{\bf Example ${\bf I_{z_0}}$-Soft.}\, Reconstruction of Kite shaped domain with $10\%$
noise and different reference points.}
\label{softkite8}
\end{figure}

\textbf{Example ${\bf I_{z_0}}$-Multiple}. We consider the scattering by a scatterer with two disjoint components.
The scatterer is a combination of a sound-soft peanut shaped domain with $(a,b)=(0,0)$ and a sound-hard
kite shaped domain with $(a,b)=(6,0)$.  Figure \ref{kitepeanut8} shows the reconstructions with different noises.

\begin{figure}[htbp]
  \centering
  \subfigure[\textbf{True domain.}]{
    \includegraphics[width=2in]{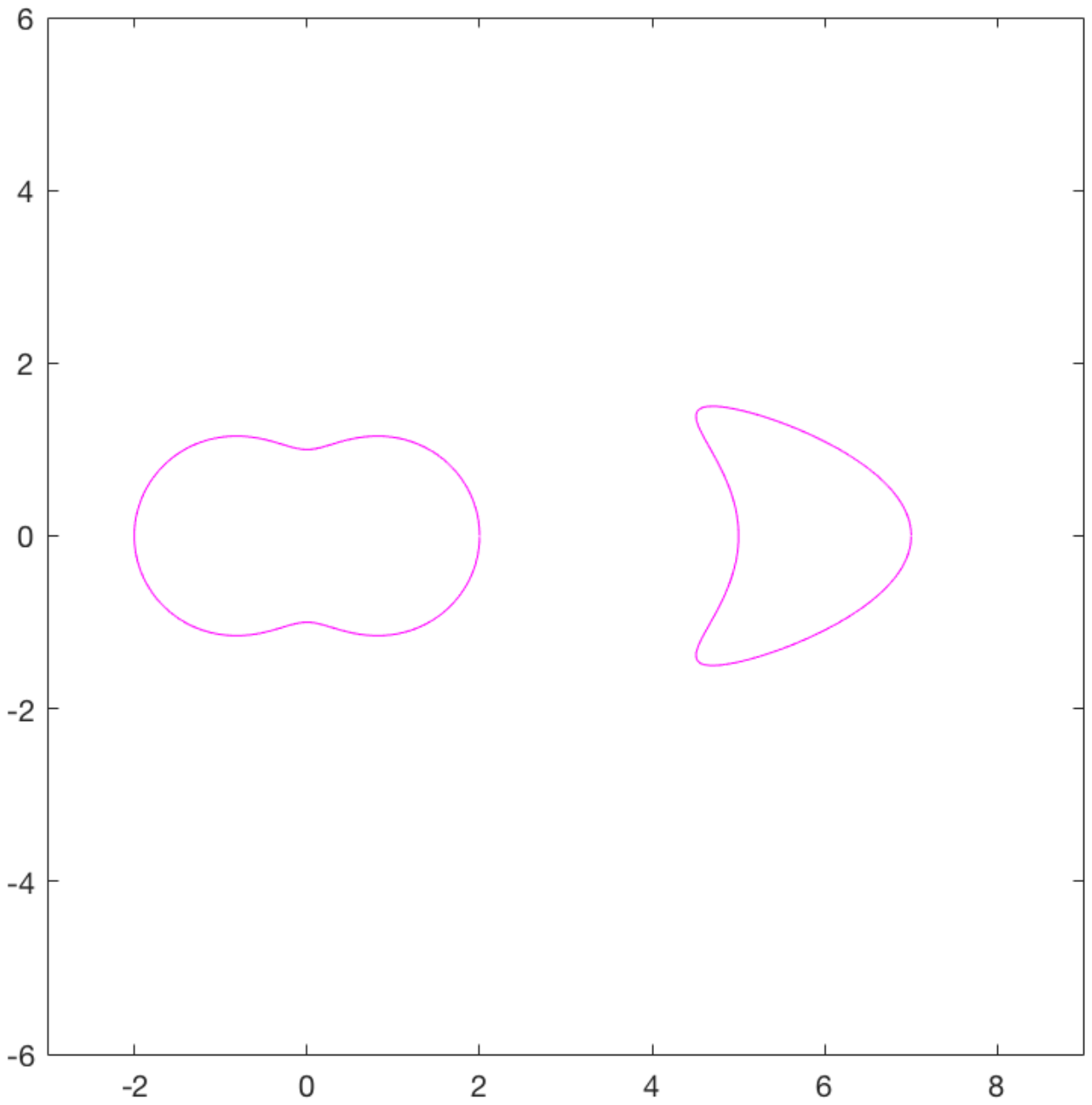}}
  \subfigure[\textbf{$10\%$noise.}]{
    \includegraphics[width=2in]{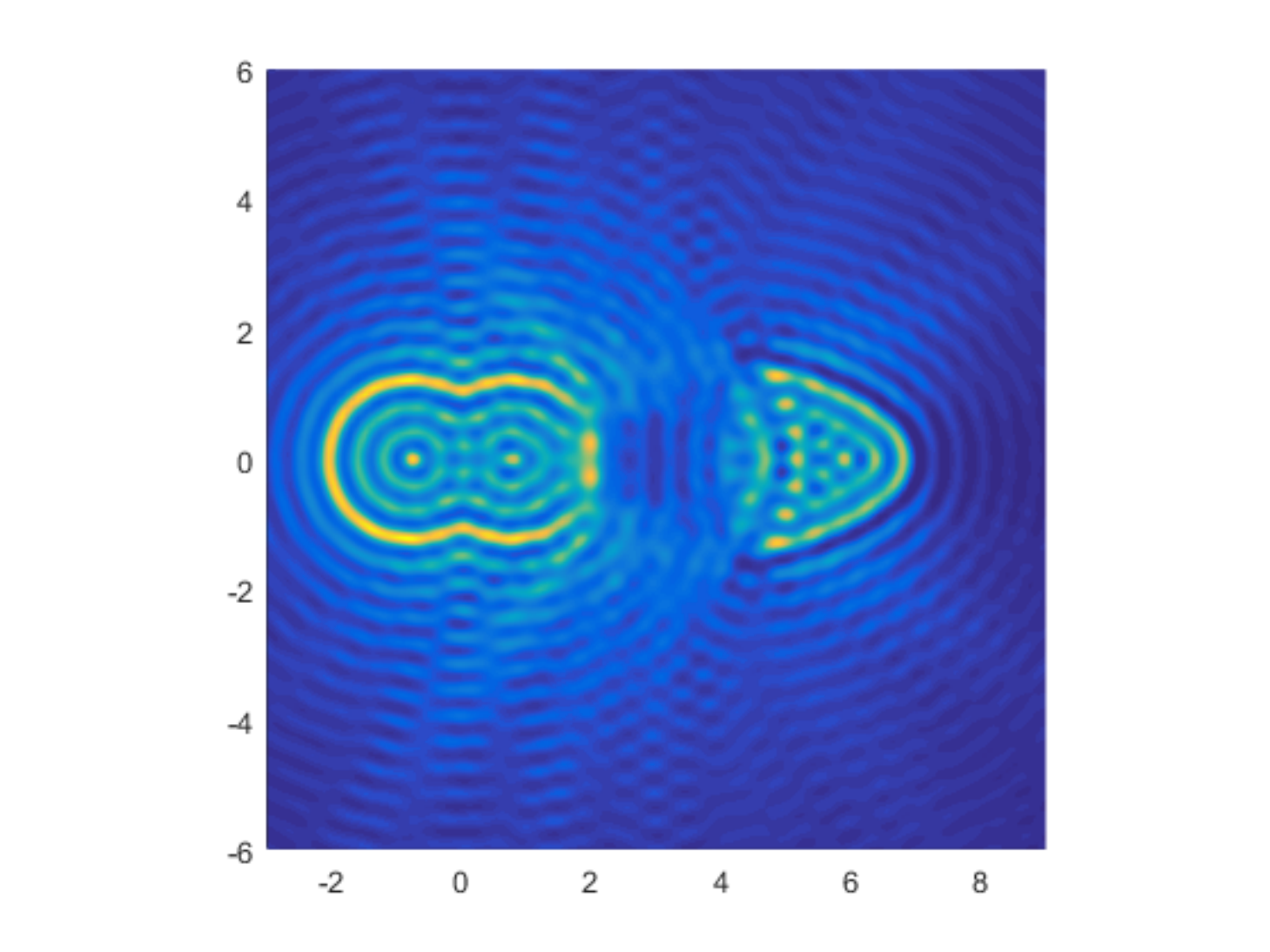}}
  \subfigure[\textbf{$30\%$noise.}]{
    \includegraphics[width=2in]{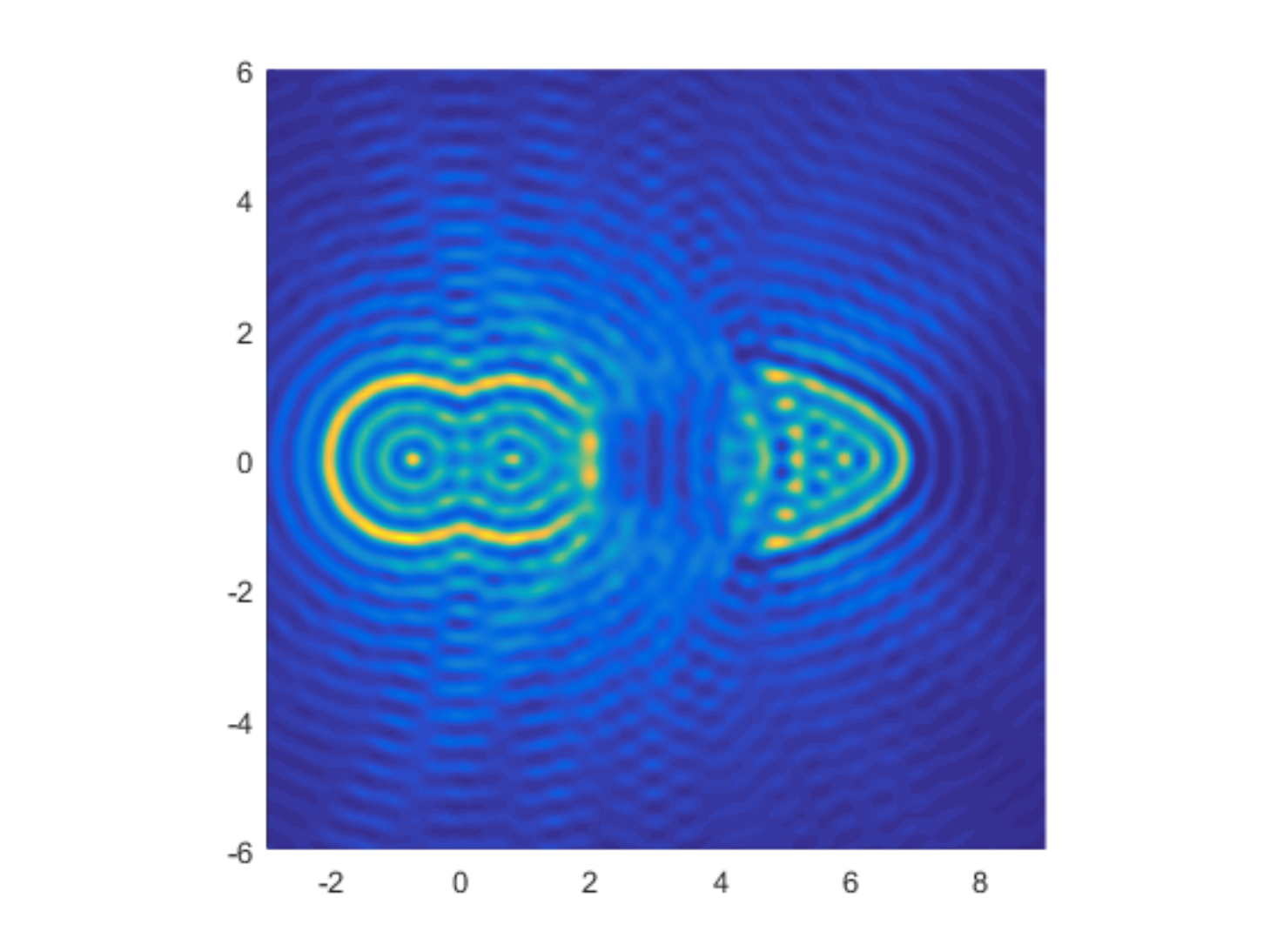}}
\caption{{\bf Example ${\bf I_{z_0}}$-Multiple.}\, Reconstruction of mixed type scatterers with different noise.}
\label{kitepeanut8}
\end{figure}

\textbf{Example ${\bf I_{z_0}}$-Multiscalar}. In this example, the underlying scatterer is a
combination of a big pear domain centered at $(0,0)$ and a mini disk with radius $r=0.1$ centered at
$(a,b)=(2,2)$. We impose Dirichlet boundary condition on both of them. The reconstructions are shown in Figure $\ref{irclesquare8}$. We observe that both parts can be reconstructed clearly. In particular, the mini disk is
also exactly located, even with $30\%$ noise.

\begin{figure}[htbp]
  \centering
  \subfigure[\textbf{True domain.}]{
    \includegraphics[width=2in]{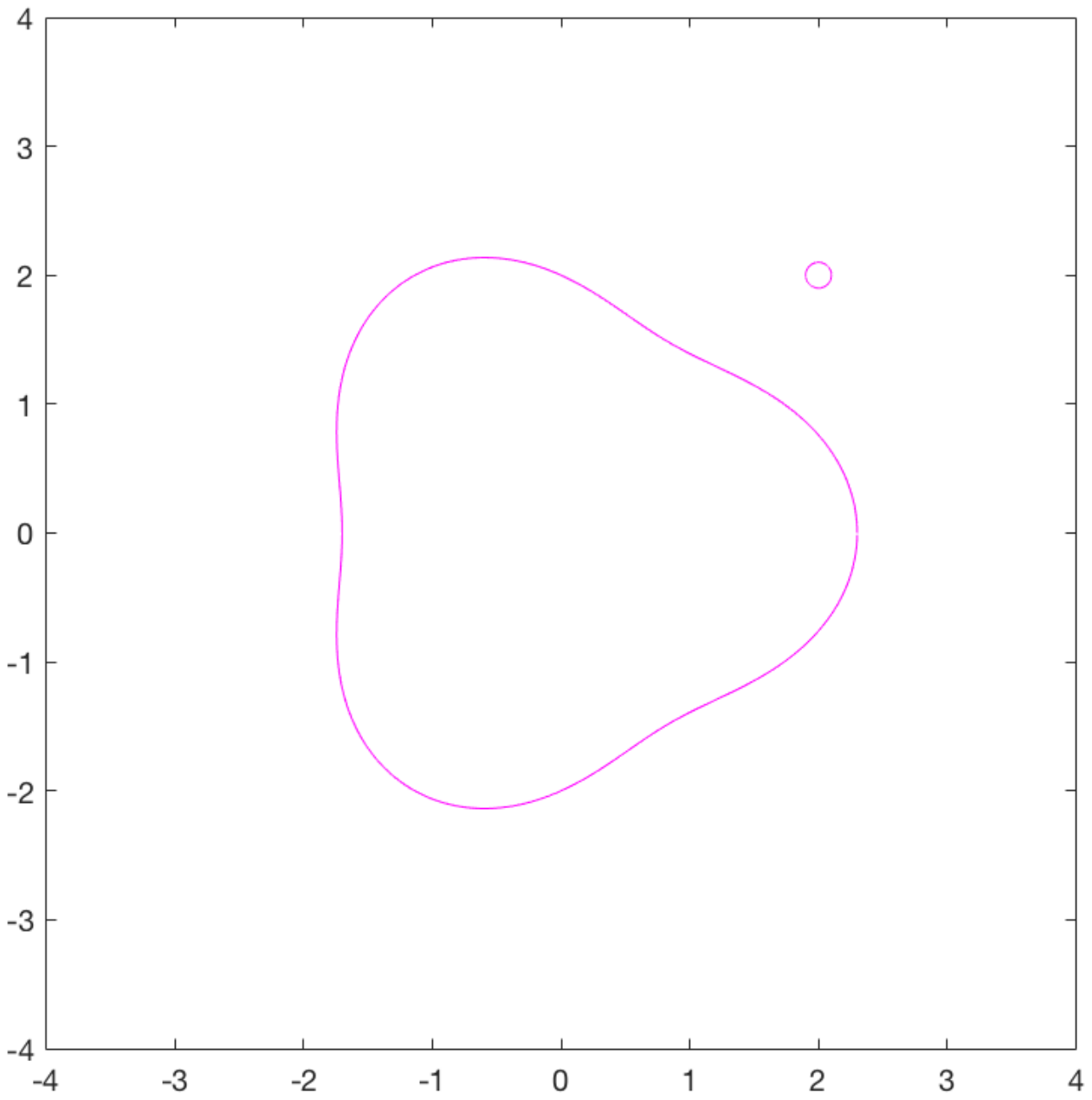}}
  \subfigure[\textbf{$10\%$noise.}]{
    \includegraphics[width=2in]{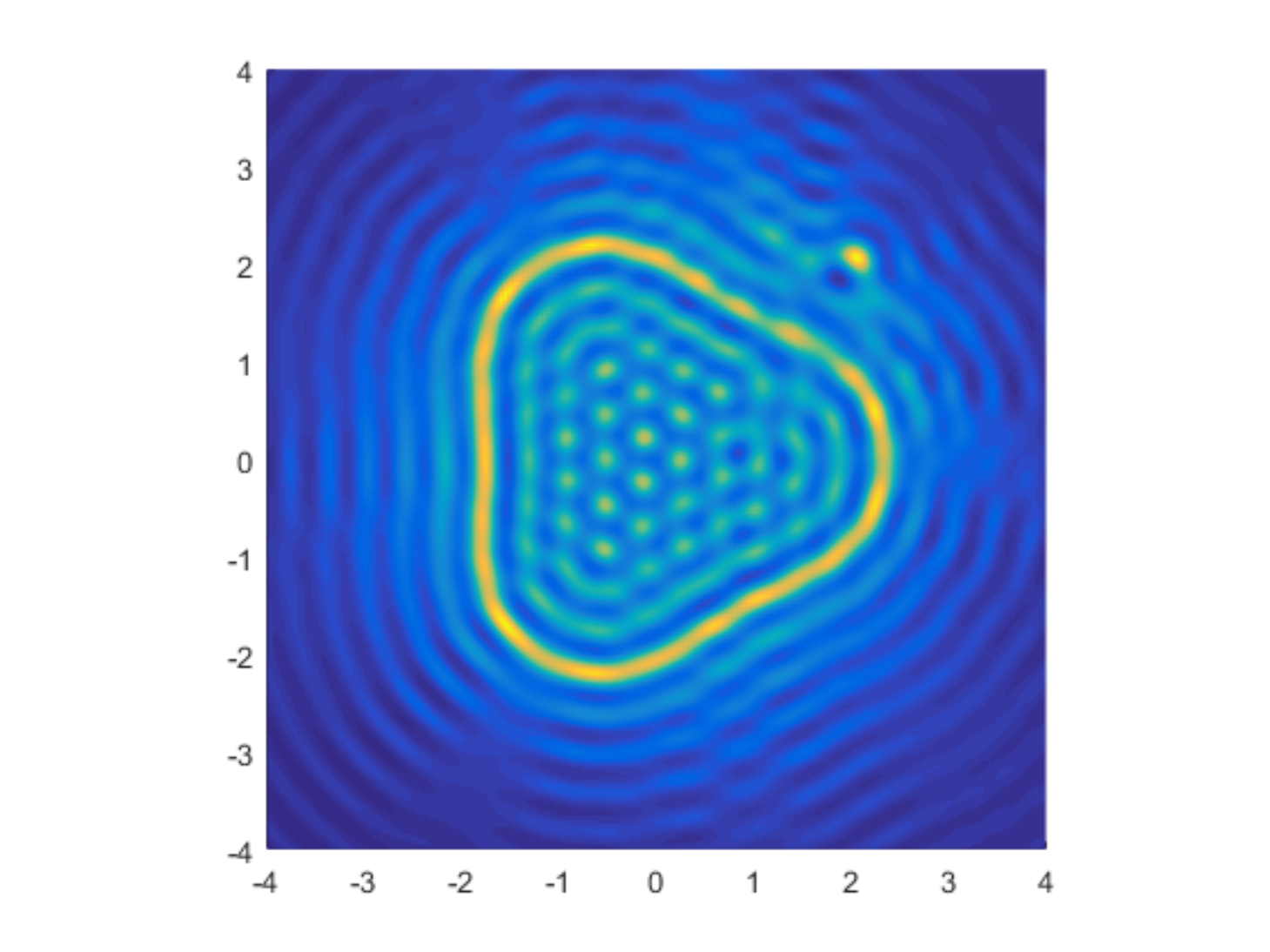}}
  \subfigure[\textbf{$30\%$noise.}]{
    \includegraphics[width=2in]{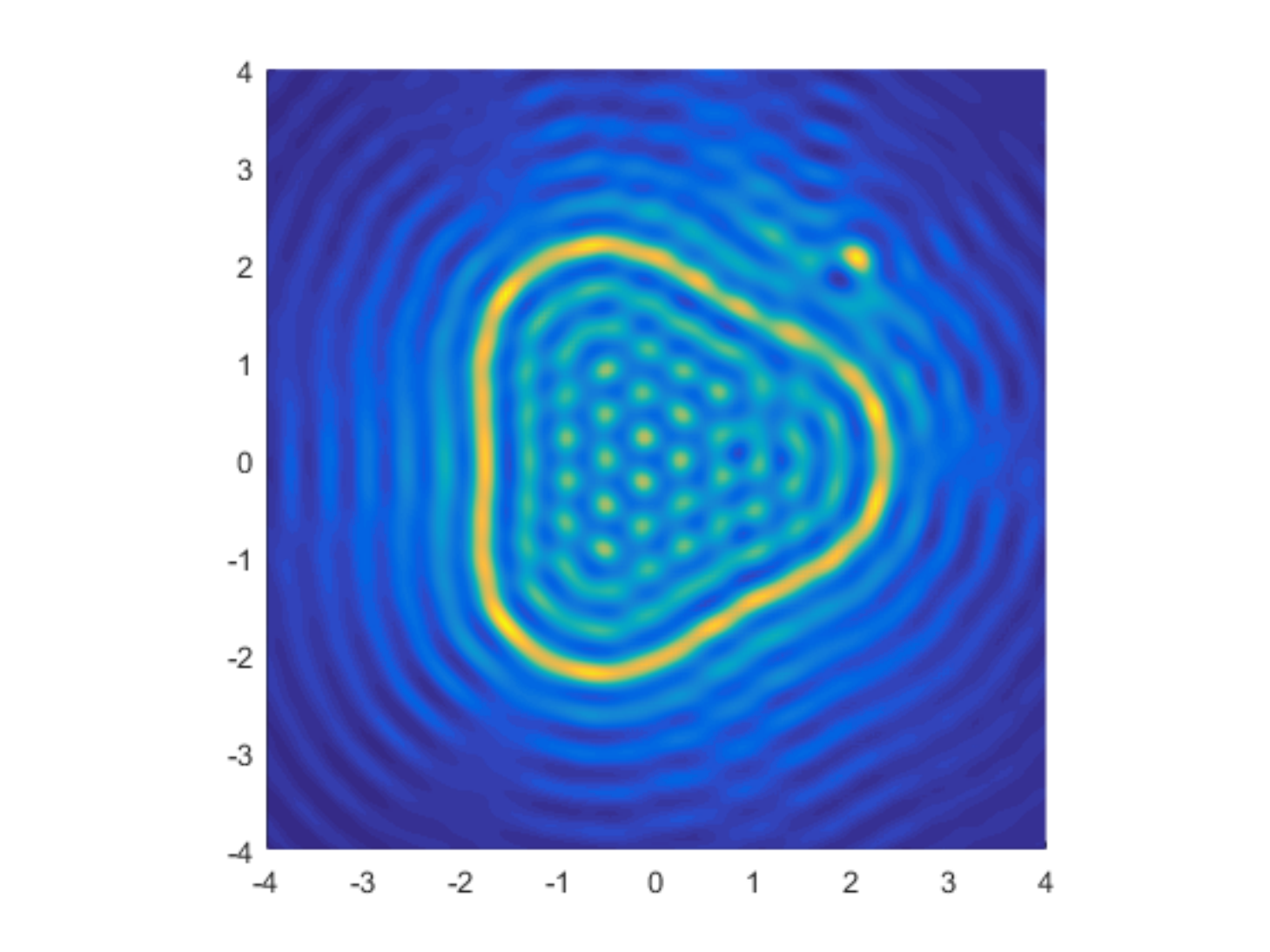}}
\caption{{\bf Example ${\bf I_{z_0}}$-Multiscalar.}\, Reconstruction of multiscalar scatterers
with different noise.}
\label{irclesquare8}
\end{figure}

\textbf{Example ${\bf I^{\Theta}_{z_0}}$-Small}. In this example, the scatterer is a combination of two
mini disks, one with radius $0.05$ centered at
$(a,b)=(3,3)$ and the other with radius $0.15$ centered at $(a,b)=(1,1)$. We impose Dirichlet boundary
condition on the smaller disk and Neumann boundary condition on the bigger one. Figure \ref{iTheta} shows
the reconstructions by ${\bf I^{\Theta}_{z_0}}$ with the same reference points as in
the \textbf{Example ${\bf I_{z_0}}$-Soft}.

\begin{figure}[htbp]
  \centering
  \subfigure[\textbf{$z_0=(2,4)$.}]{
    \includegraphics[width=2in]{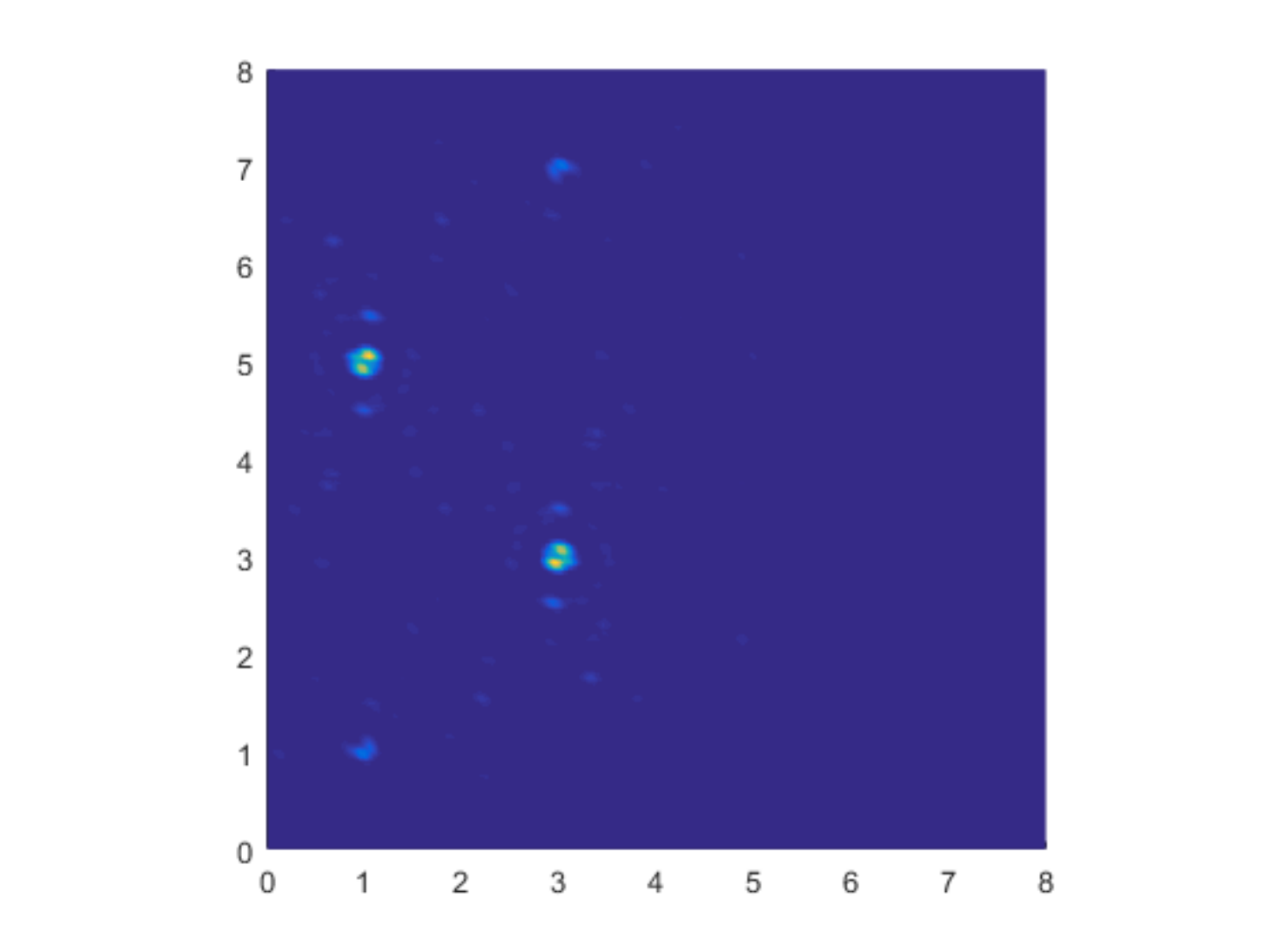}}
  \subfigure[\textbf{$z_0=(4,4)$.}]{
    \includegraphics[width=2in]{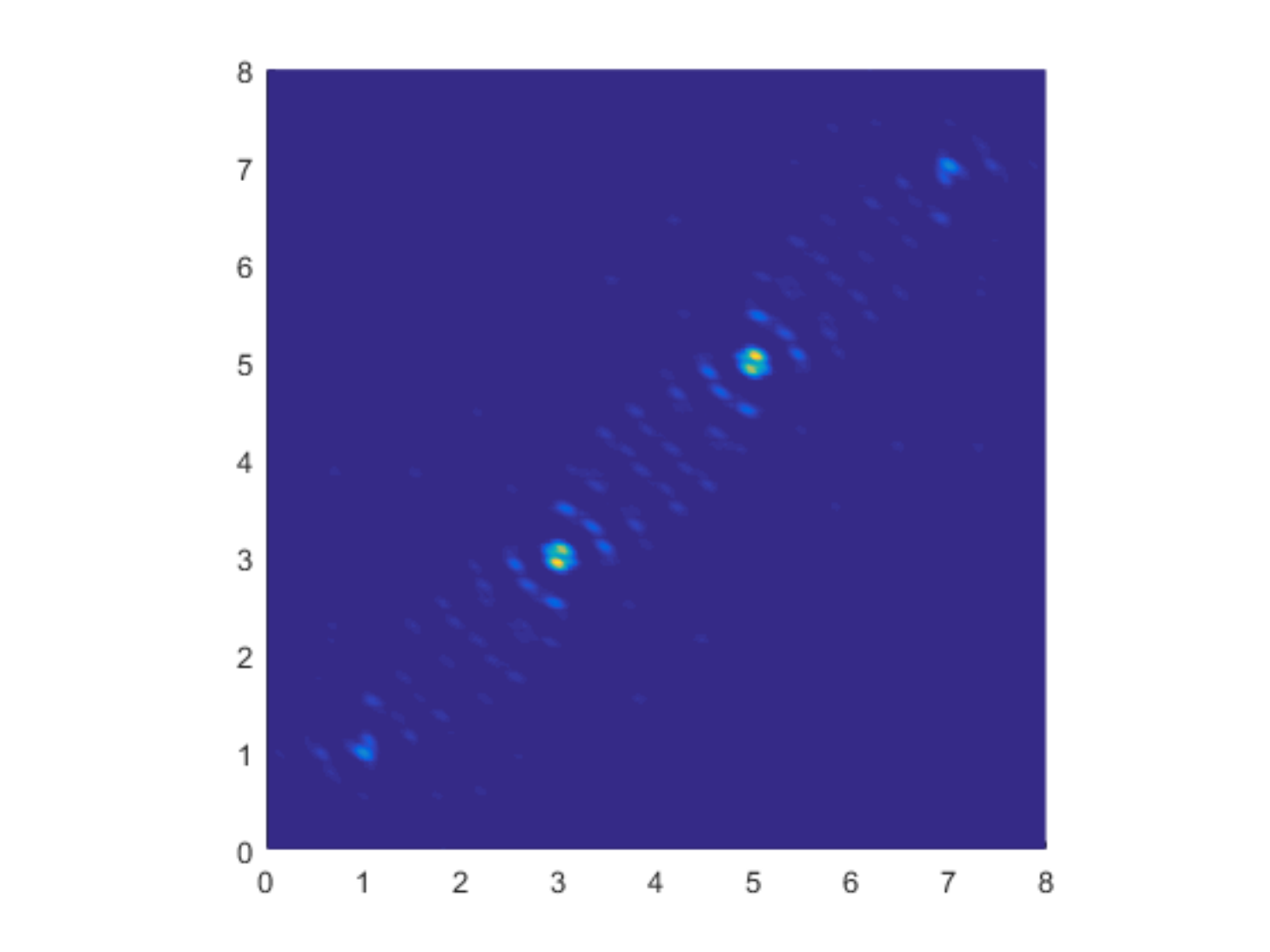}}
  \subfigure[\textbf{$z_0=(12,12)$.}]{
    \includegraphics[width=2in]{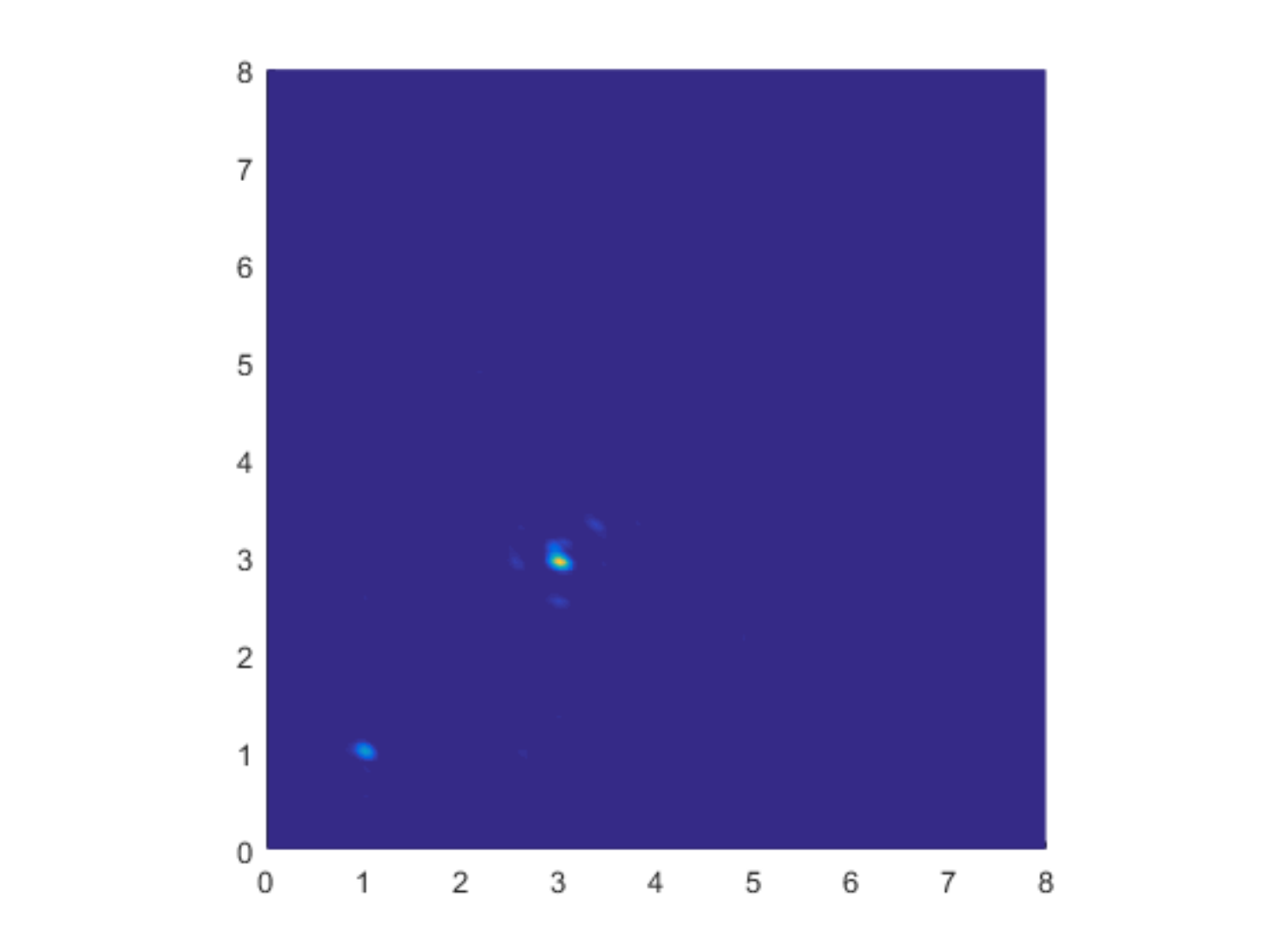}}
\caption{{\bf Example ${\bf I^{\Theta}_{z_0}}$-Small.}\, Reconstruction of two small disks
by using ${\bf I^{\Theta}_{z_0}}$ with $10\%$ noise at different reference points.
Here, $\Theta:=\{(1,0), (0,1), (-1,0), (0,-1)\}$.}
\label{iTheta}
\end{figure}

In the next example, we consider the effectiveness and robustness of the novel phase retrieval proposed
in Section \ref{subsec-phaseretrieval}.
After this, we check the validity of the {\bf Scatterer Reconstruction Scheme Two}.
In the following examples, we take $\tau=-1,1,i$.\\

\textbf{Example PhaseRetrieval}.
This example is designed to check the phase retrieval scheme proposed in Section \ref{subsec-phaseretrieval}.
The underlying scatterer is chosen to be a kite shaped domain.
For comparison, we consider the real part of far field pattern at a fixed incident direction $\hth=(1,0)$.
Figure \ref{phaseretrieval0} shows the results without measurement noise by using three different reference
points $(2,2), \,(3,3)$ and $(4,4)$. In particular, the reference point
$(2,2)$ is very close to the kite shaped domain. However, Figure \ref{phaseretrieval0}(a) shows that the
multiple scattering is very week. Of course, Figures \ref{phaseretrieval0}(b)-(c) show that the interaction
between the reference point and the kite shaped domain decreases as the reference point away from the target.
Figures \ref{phaseretrieval1}-\ref{phaseretrieval2} show the results with relative error and absolute error
considered, respectively.
We find that our phase retrieval scheme is quite robust with respect to noise. This also verifies the theory
provided in Theorem \ref{phaseretrieval-stability}.\\

\begin{figure}[htbp]
   \centering
   \subfigure[\textbf{$z_0=(2,2)$.}]{
     \includegraphics[height=1.5in,width=2in]{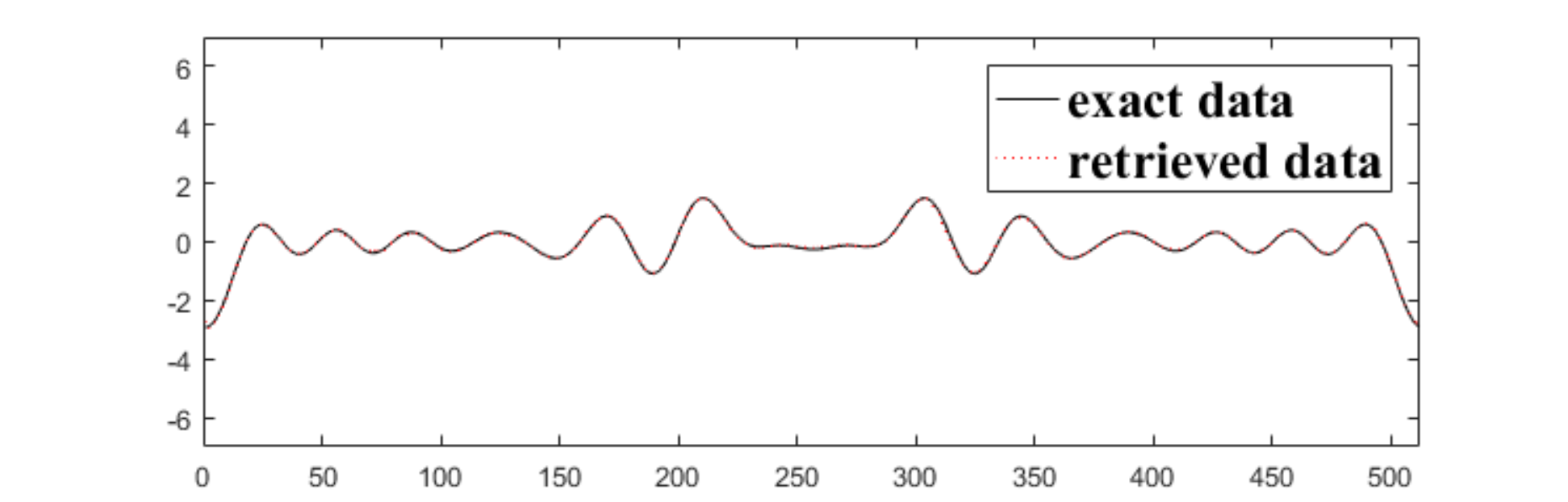}}
   \subfigure[\textbf{$z_0=(3,3)$.}]{
     \includegraphics[height=1.5in,width=2in]{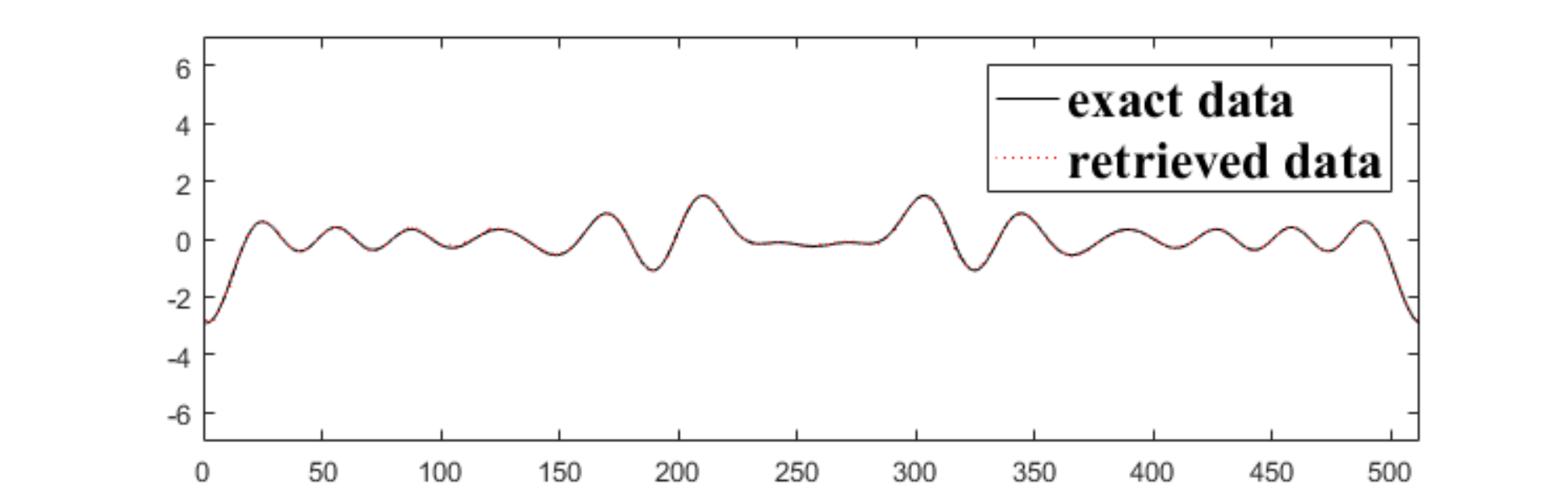}}
   \subfigure[\textbf{$z_0=(4,4)$.}]{
     \includegraphics[height=1.5in,width=2in]{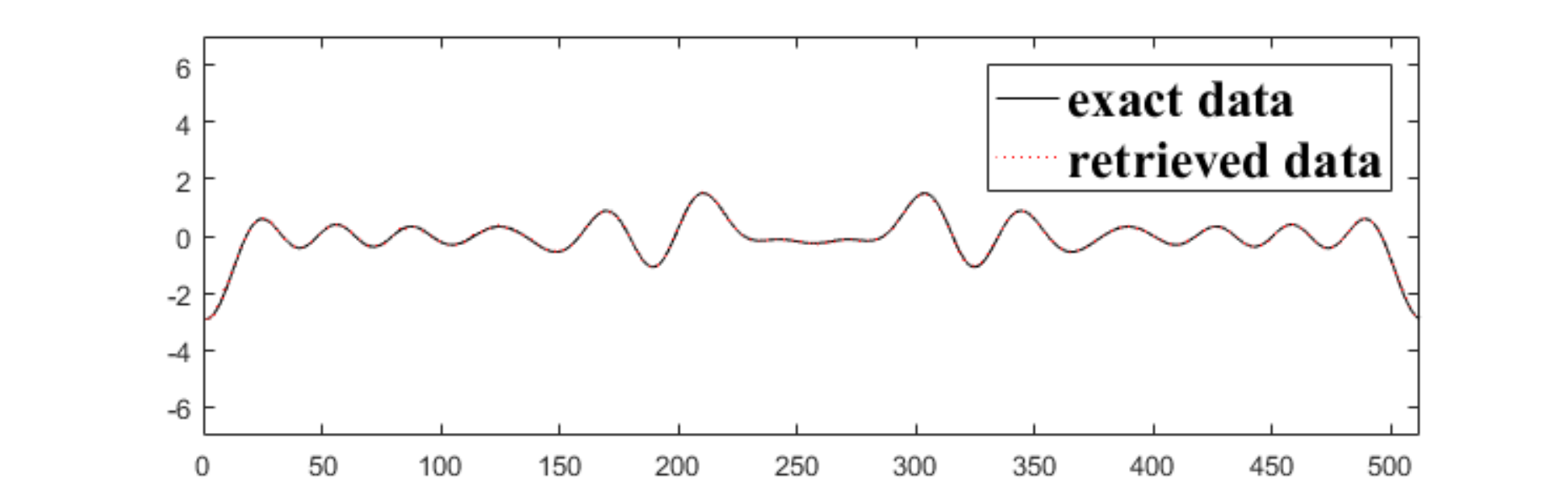}}
 \caption{{\bf Example PhaseRetrieval.}\, Phase retrieval for the real part of the far field
 pattern without error at a fixed incident direction $\hth=(1,0)$ using different reference points.}
 \label{phaseretrieval0}
 \end{figure}

\begin{figure}[htbp]
   \centering
   \subfigure[\textbf{without noise.}]{
     \includegraphics[height=1.5in,width=2in]{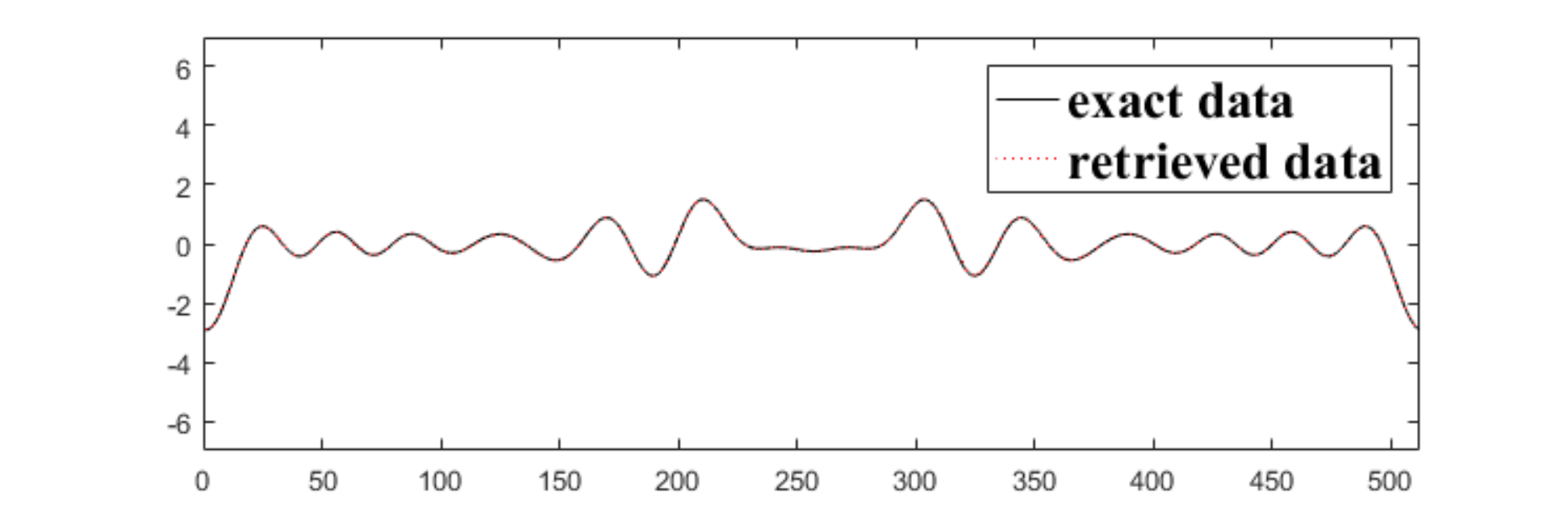}}
   \subfigure[\textbf{$10\%$ noise.}]{
     \includegraphics[height=1.5in,width=2in]{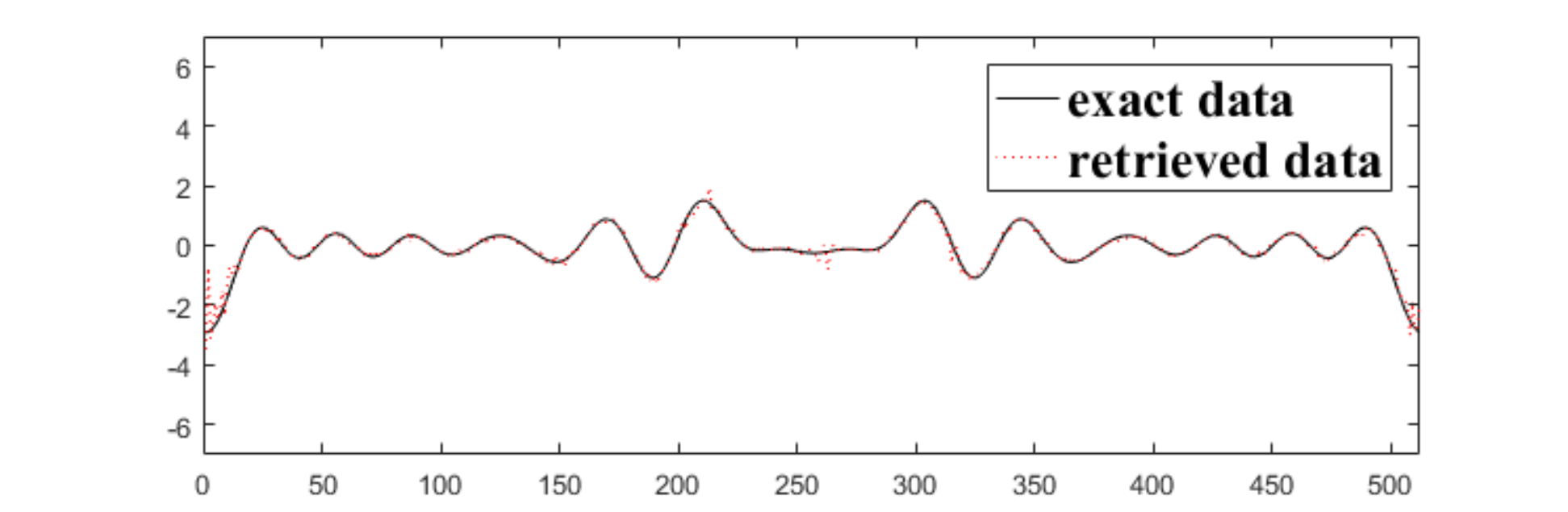}}
   \subfigure[\textbf{$30\%$ noise.}]{
     \includegraphics[height=1.5in,width=2in]{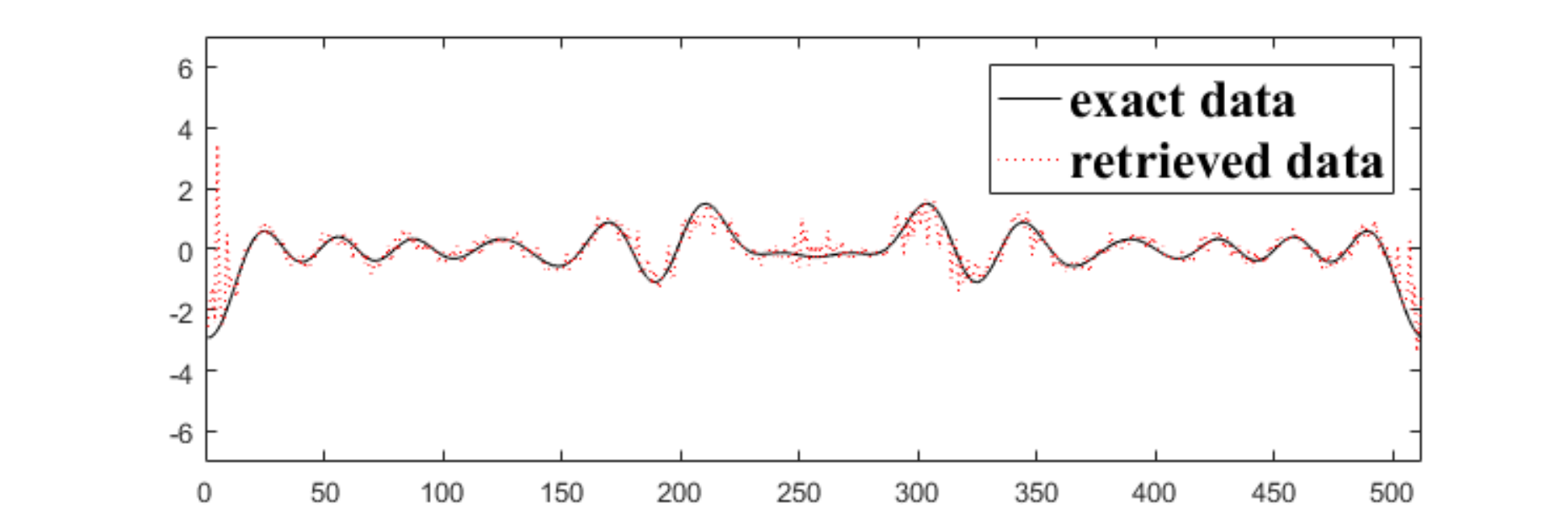}}
 \caption{{\bf Example PhaseRetrieval.}\, Phase retrieval for the real part of the far field
 pattern with relative error at a fixed incident direction $\hth=(1,0)$.}
 \label{phaseretrieval1}
 \end{figure}

 \begin{figure}[htbp]
   \centering
   \subfigure[\textbf{without noise.}]{
     \includegraphics[height=1.5in,width=2in]{pic/FRkitek8noise0n512z1212softd10.pdf}}
   \subfigure[\textbf{0.1 noise.}]{
     \includegraphics[height=1.5in,width=2in]{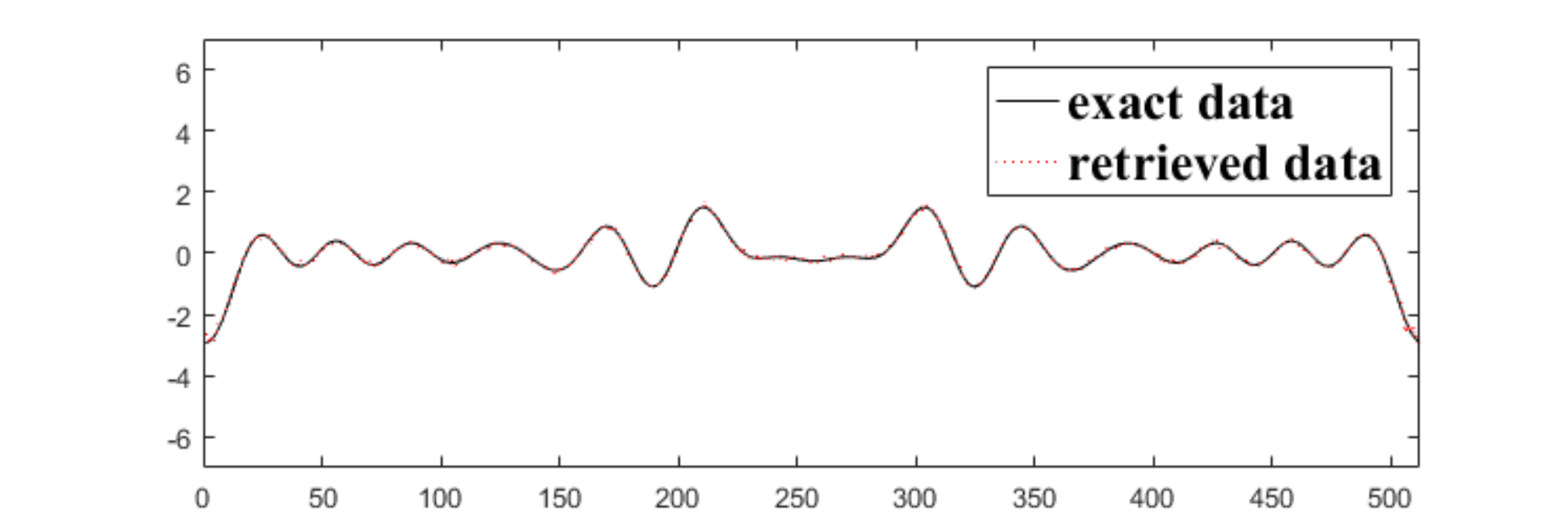}}
   \subfigure[\textbf{0.3 noise.}]{
     \includegraphics[height=1.5in,width=2in]{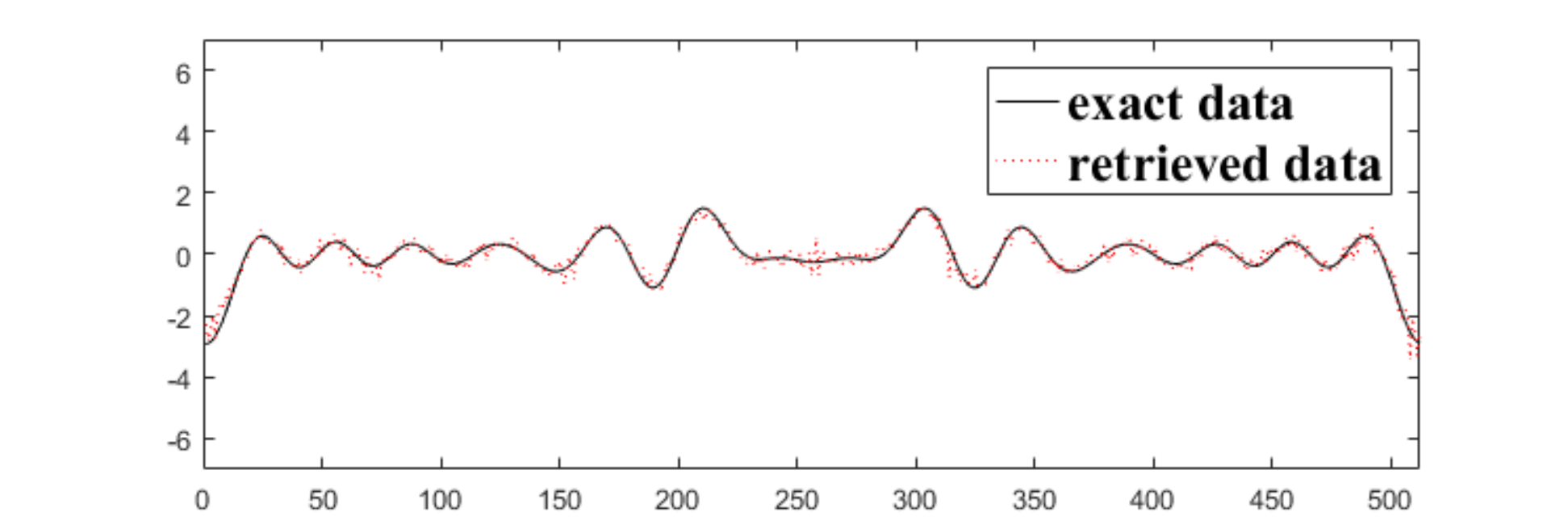}}
 \caption{{\bf Example PhaseRetrieval.}\, Phase retrieval for the real part of the far field
 pattern with absolute error at a fixed incident direction $\hth=(1,0)$.}
 \label{phaseretrieval2}
 \end{figure}

\textbf{Example ${\bf I_2}$-Soft}.  The scatterer is the same as the \textbf{Example ${\bf I_{z_0}}$-Soft}.
For comparisons, we choose the same reference points $z_0=(2,4),(4,4),(12,12)$. Figure \ref{2kite8} gives
the results with $10\%$ noise.
Different to the \textbf{Example ${\bf I_{z_0}}$-Soft}, no false domain appears in the reconstructions.

\begin{figure}[htbp]
  \centering
  \subfigure[\textbf{$z_0=(2,4)$.}]{
    \includegraphics[width=2in]{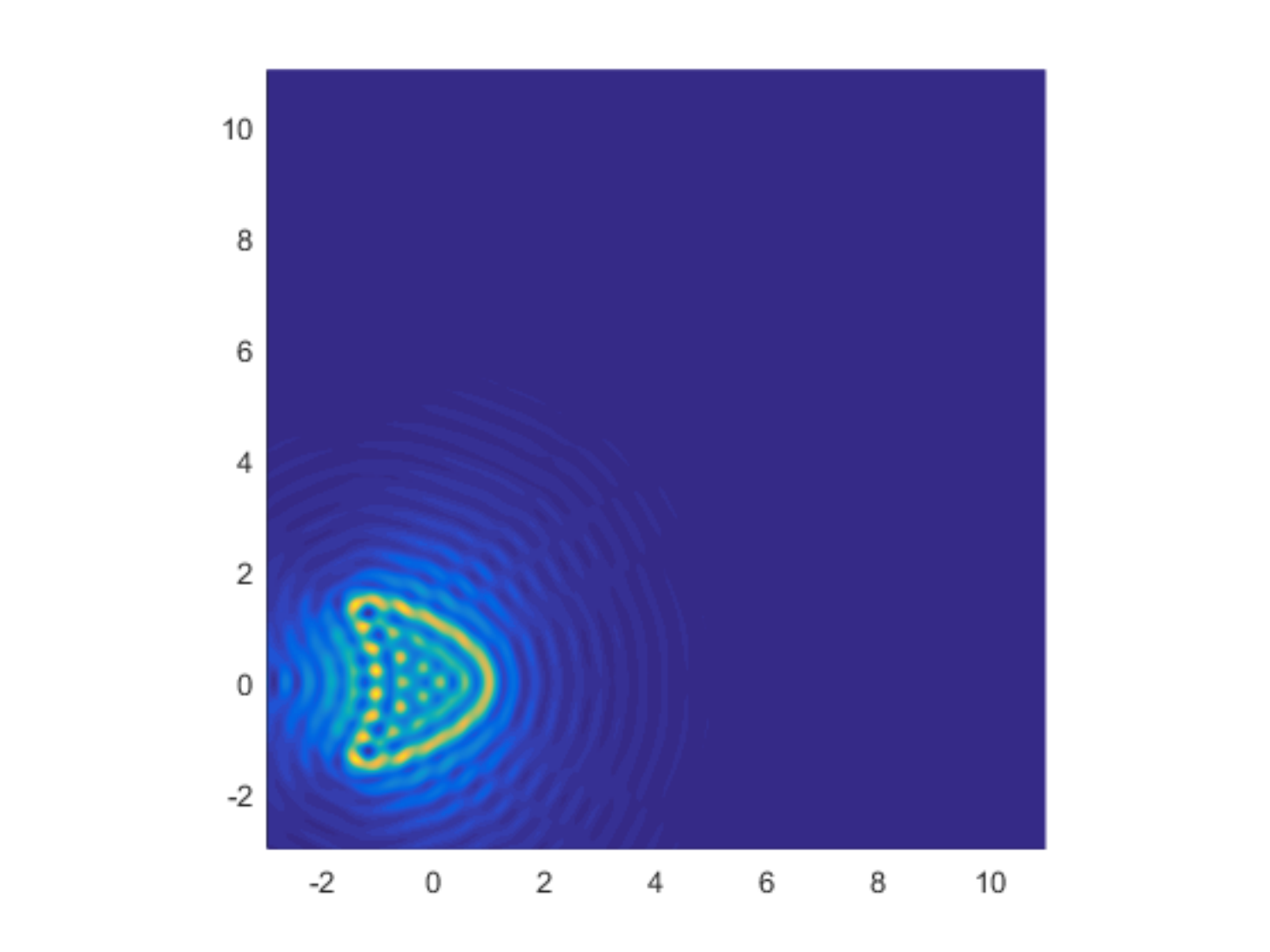}}
  \subfigure[\textbf{$z_0=(4,4)$.}]{
    \includegraphics[width=2in]{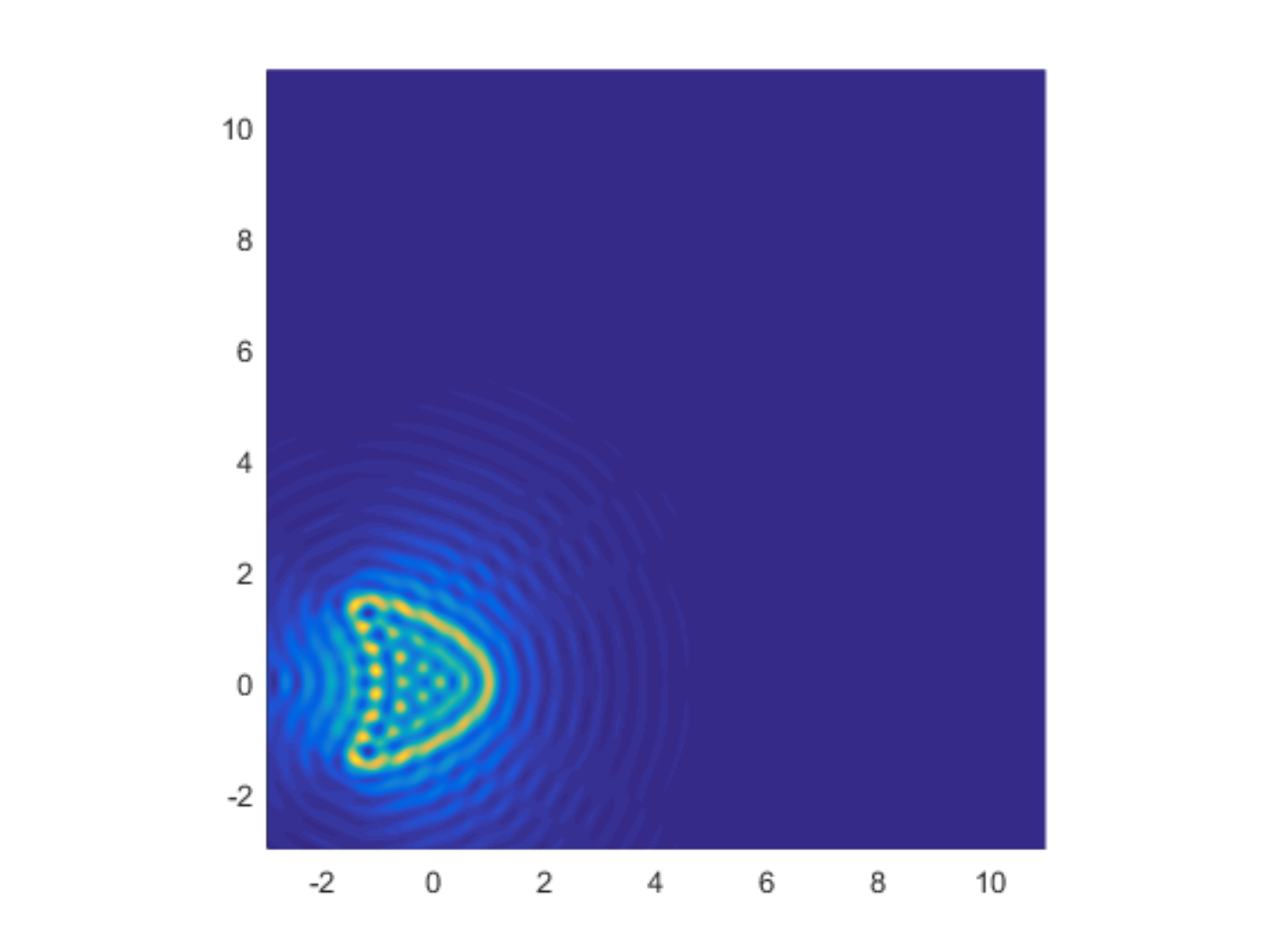}}
  \subfigure[\textbf{$z_0=(12,12)$.}]{
    \includegraphics[width=2in]{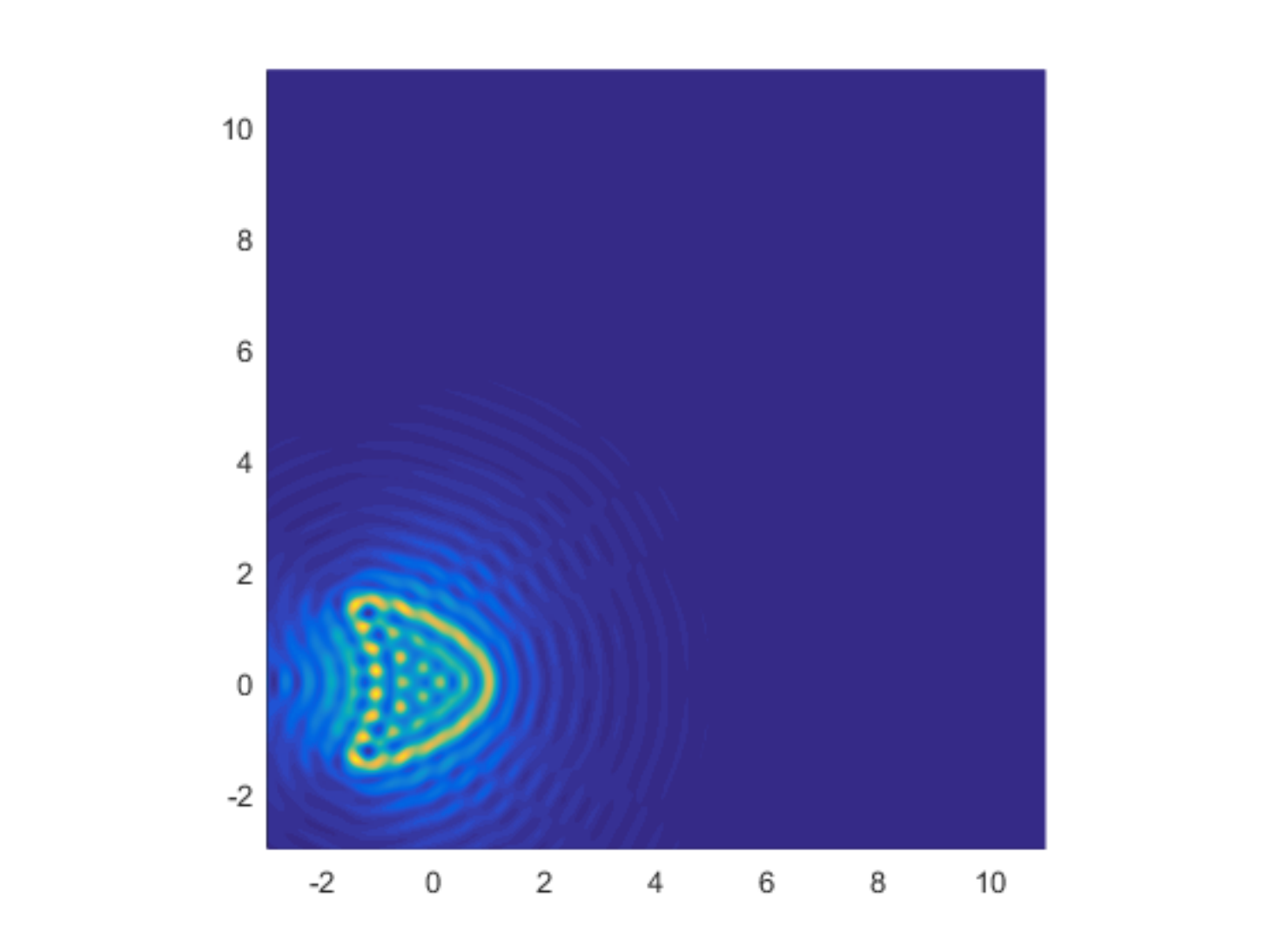}}
\caption{{\bf Example ${\bf I_2}$-Soft.}\, Reconstruction of kite shaped domain with $10\%$ noise and
different reference points.}
\label{2kite8}
\end{figure}

\textbf{Example ${\bf I_2}$-Multiple}.
The scatterer is the same as the \textbf{Example ${\bf I_{z_0}}$-Multiple}. Figure \ref{2kitepeanut8}
gives the results with $10\%, 30\%$ noise.

\begin{figure}[htbp]
  \centering
 \subfigure[\textbf{True domain.}]{
    \includegraphics[width=2in]{pic/peanutkitesofthard.pdf}}
  \subfigure[\textbf{$10\%$noise.}]{
    \includegraphics[width=2in]{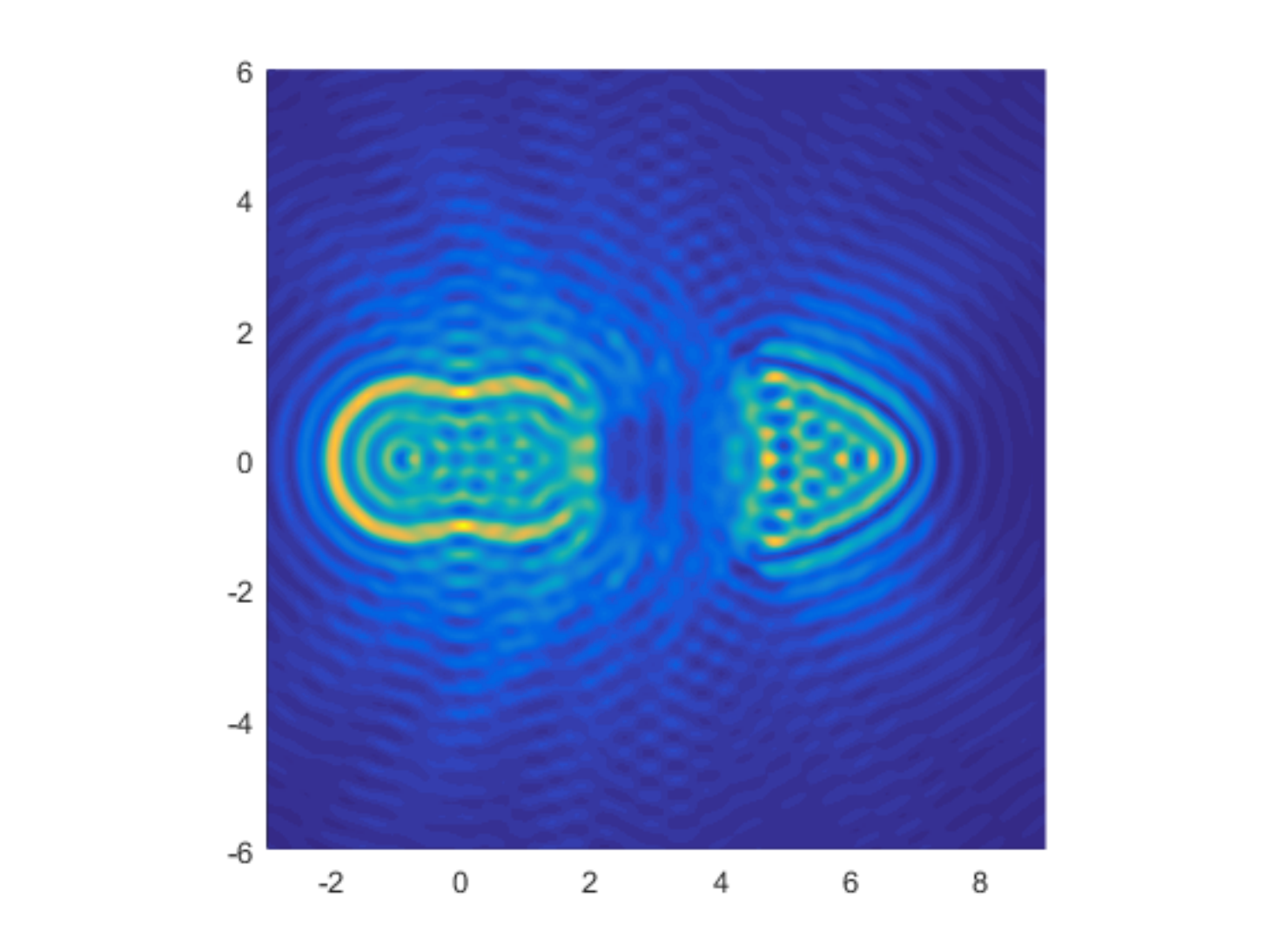}}
  \subfigure[\textbf{$30\%$noise.}]{
    \includegraphics[width=2in]{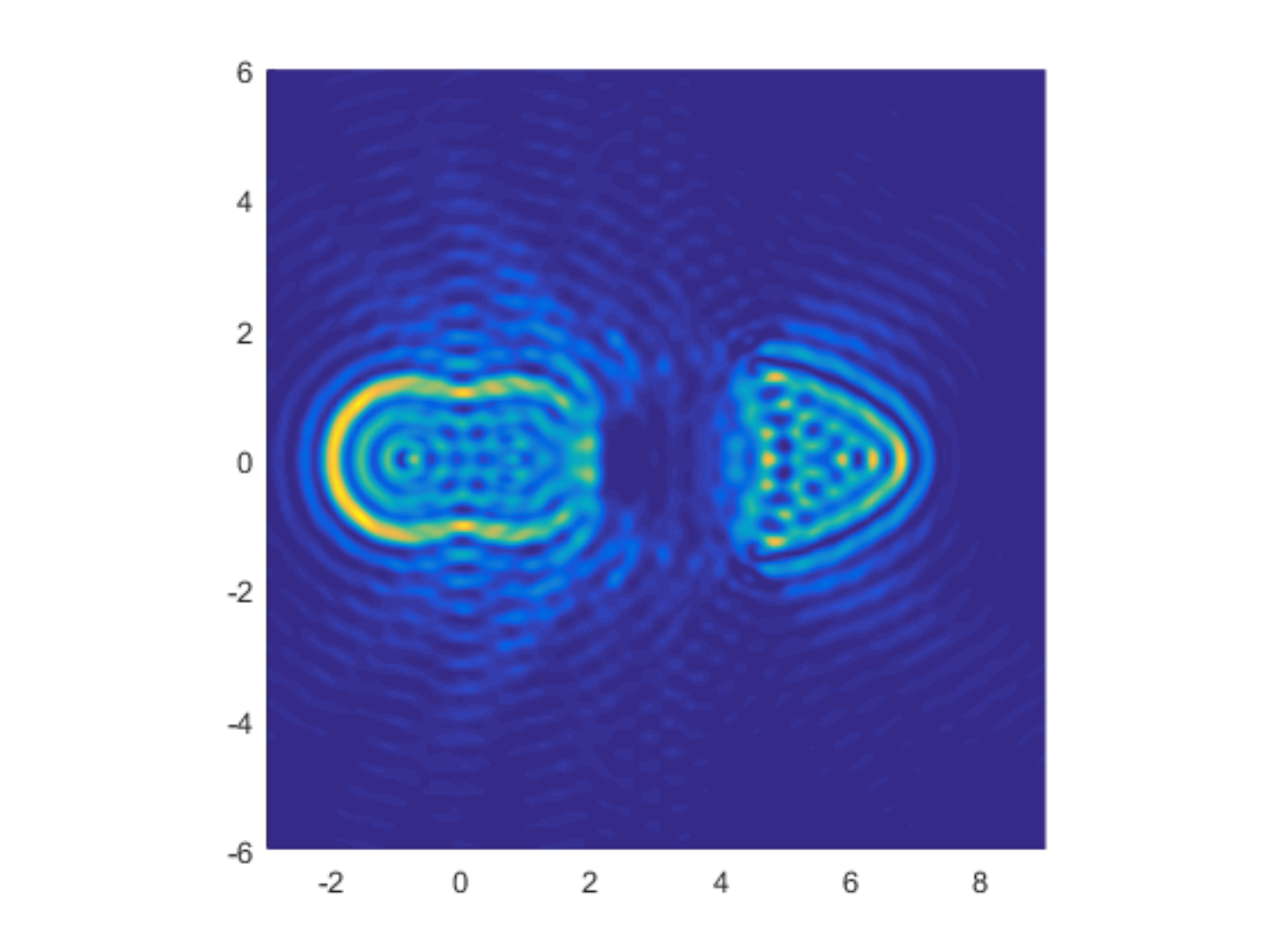}}
\caption{{\bf Example ${\bf I_2}$-Multiple.}\, Reconstruction of mixed type scatterers with different noise.}
\label{2kitepeanut8}
\end{figure}

\textbf{Example ${\bf I_2}$-Multiscalar}.    The scatterer is  the same as the
\textbf{Example ${\bf I_{z_0}}$-Multiscalar}. Figure \ref{2circlesquare8} gives
the results with $10\%, 30\%$ noise.

\begin{figure}[htbp]
  \centering
  \subfigure[\textbf{True domain.}]{
    \includegraphics[width=2in]{pic/pearcirclesoftsoft.pdf}}
  \subfigure[\textbf{$10\%$noise.}]{
    \includegraphics[width=2in]{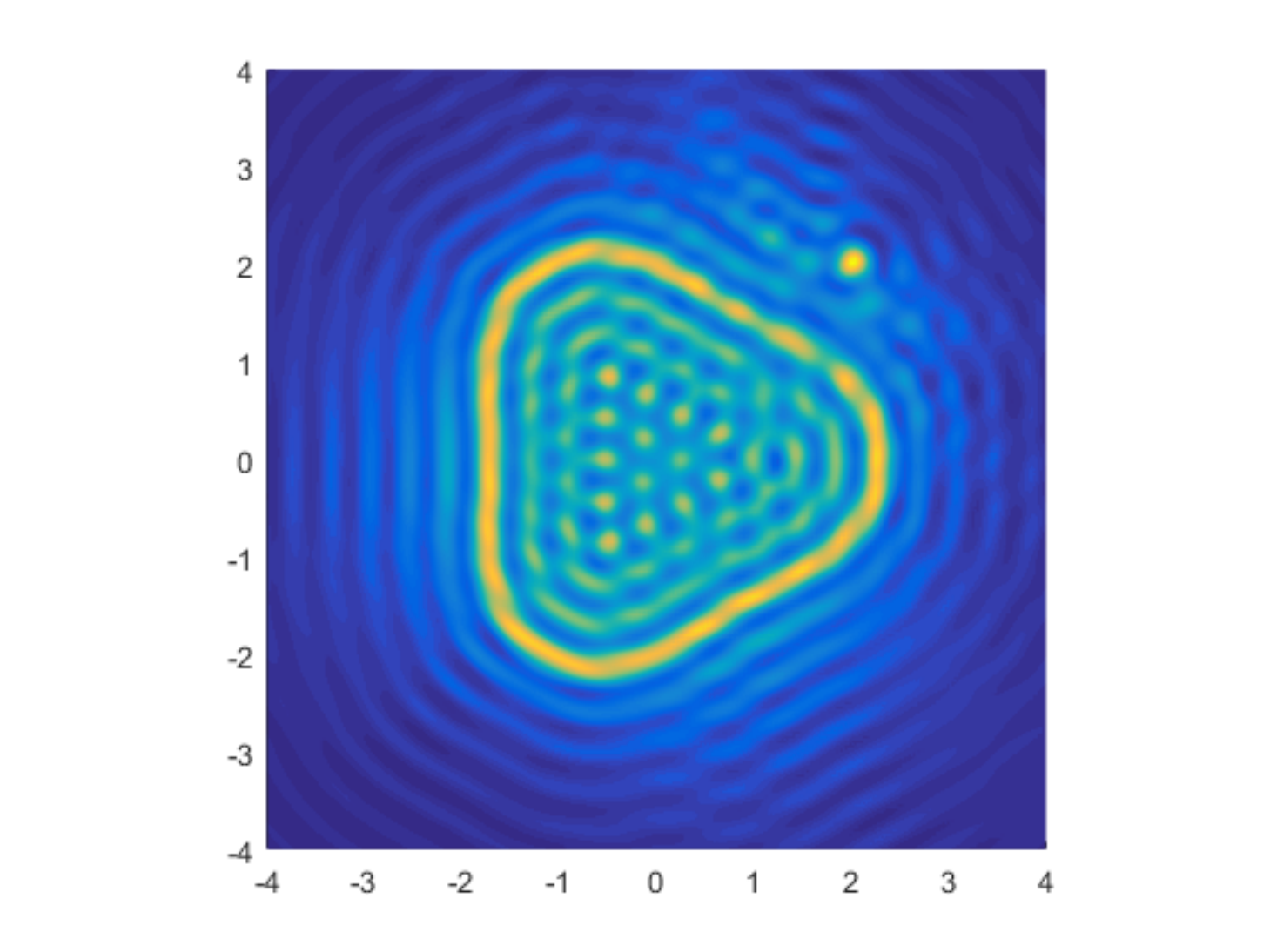}}
  \subfigure[\textbf{$30\%$noise.}]{
    \includegraphics[width=2in]{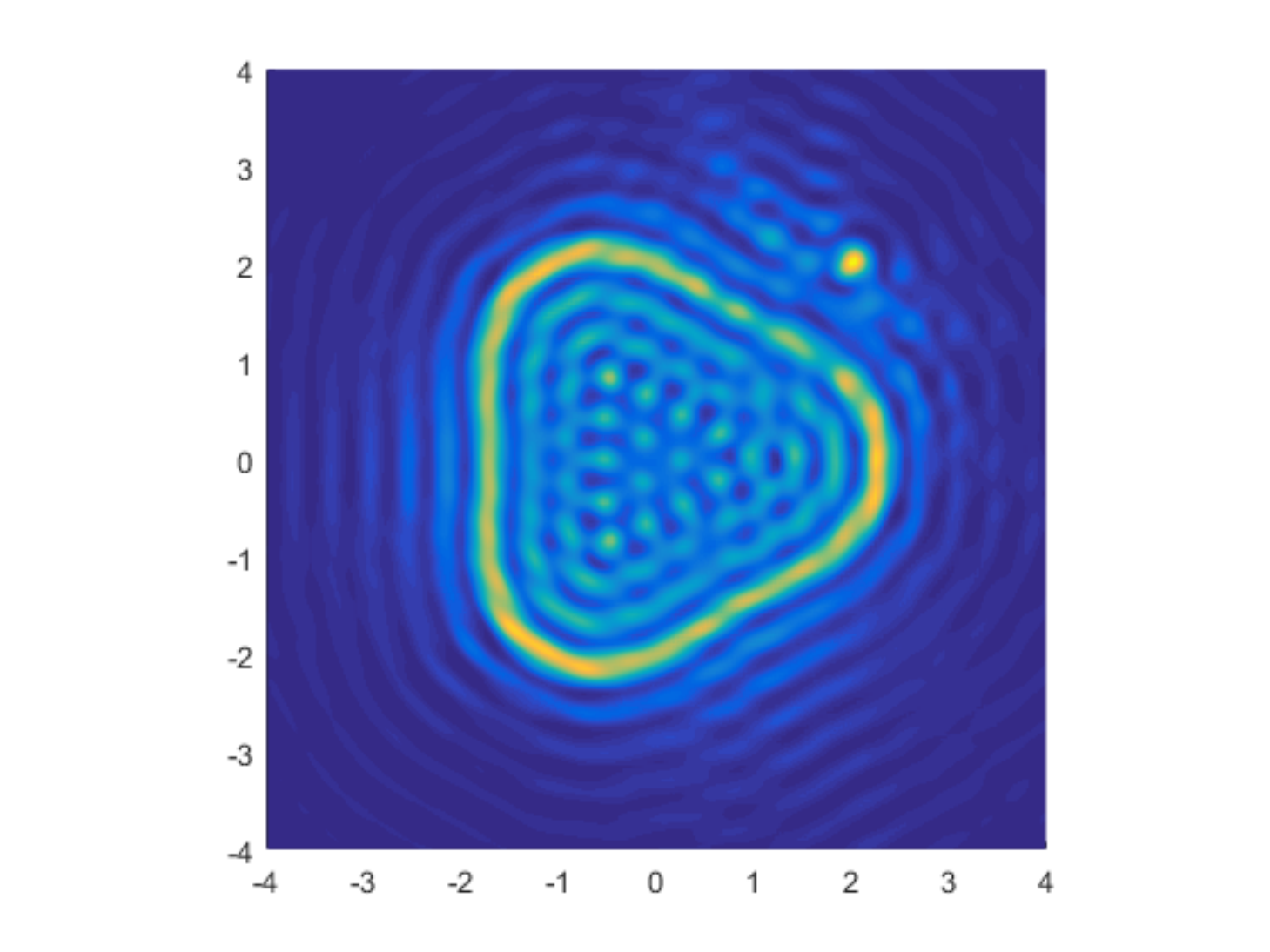}}
\caption{{\bf Example ${\bf I_2}$-Multiscalar.}\, Reconstruction of multiscalar scatterers
with different noise.}
\label{2circlesquare8}
\end{figure}

\textbf{Example ${\bf I_3}$-Small}.
The scatterer is the same as the \textbf{Example ${\bf I^{\Theta}_{z_0}}$-Small}.
Figure \ref{3smallsh8} shows the reconstructions by ${\bf I_3}(z)$ with different incident
directions $(1,0)$ and  $(0,1)$.

\begin{figure}[htbp]
  \centering
  \subfigure[\textbf{True domain.}]{
    \includegraphics[width=2in]{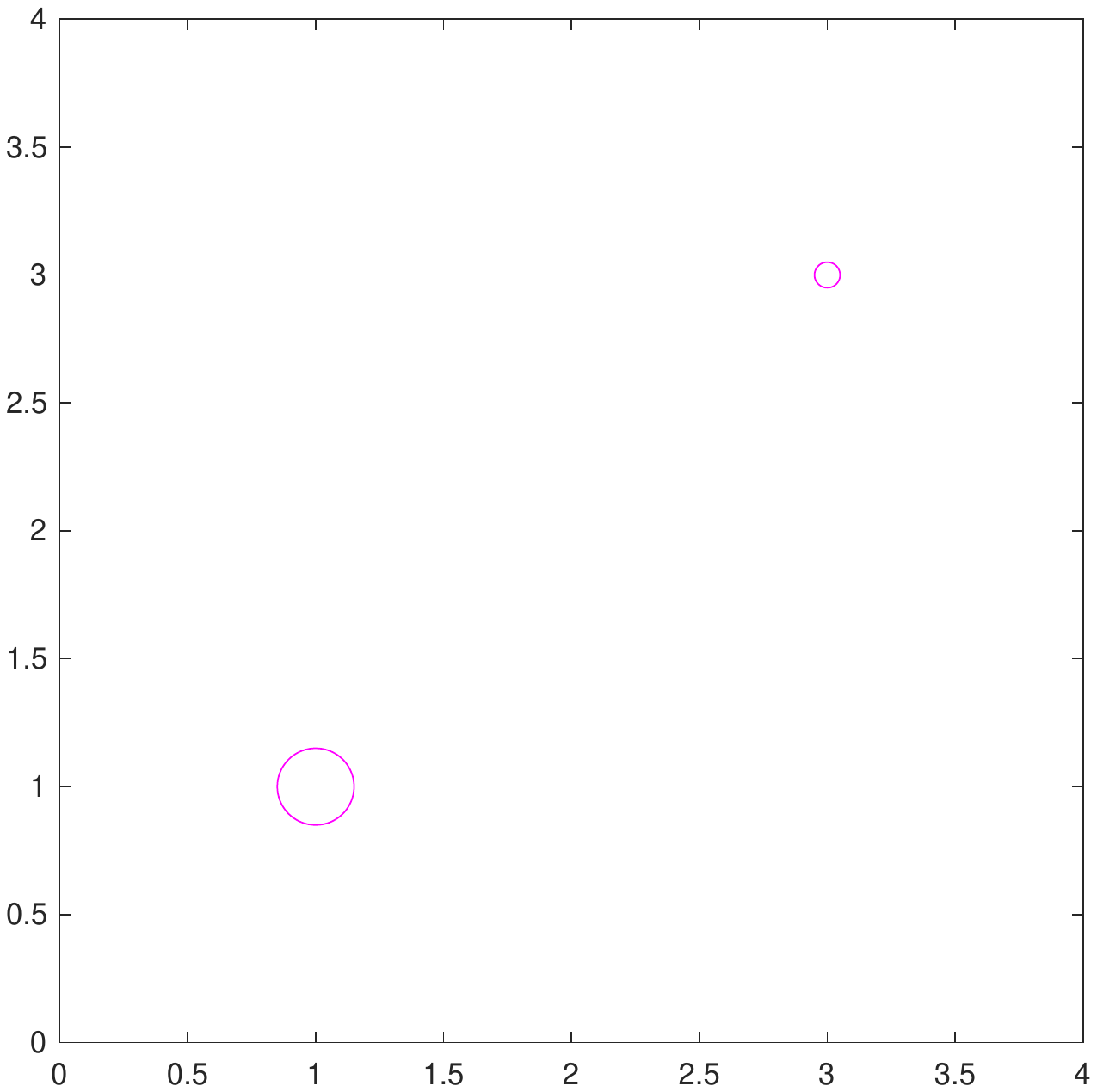}}
  \subfigure[\textbf{$\hth=(1,0)$.}]{
    \includegraphics[width=2in]{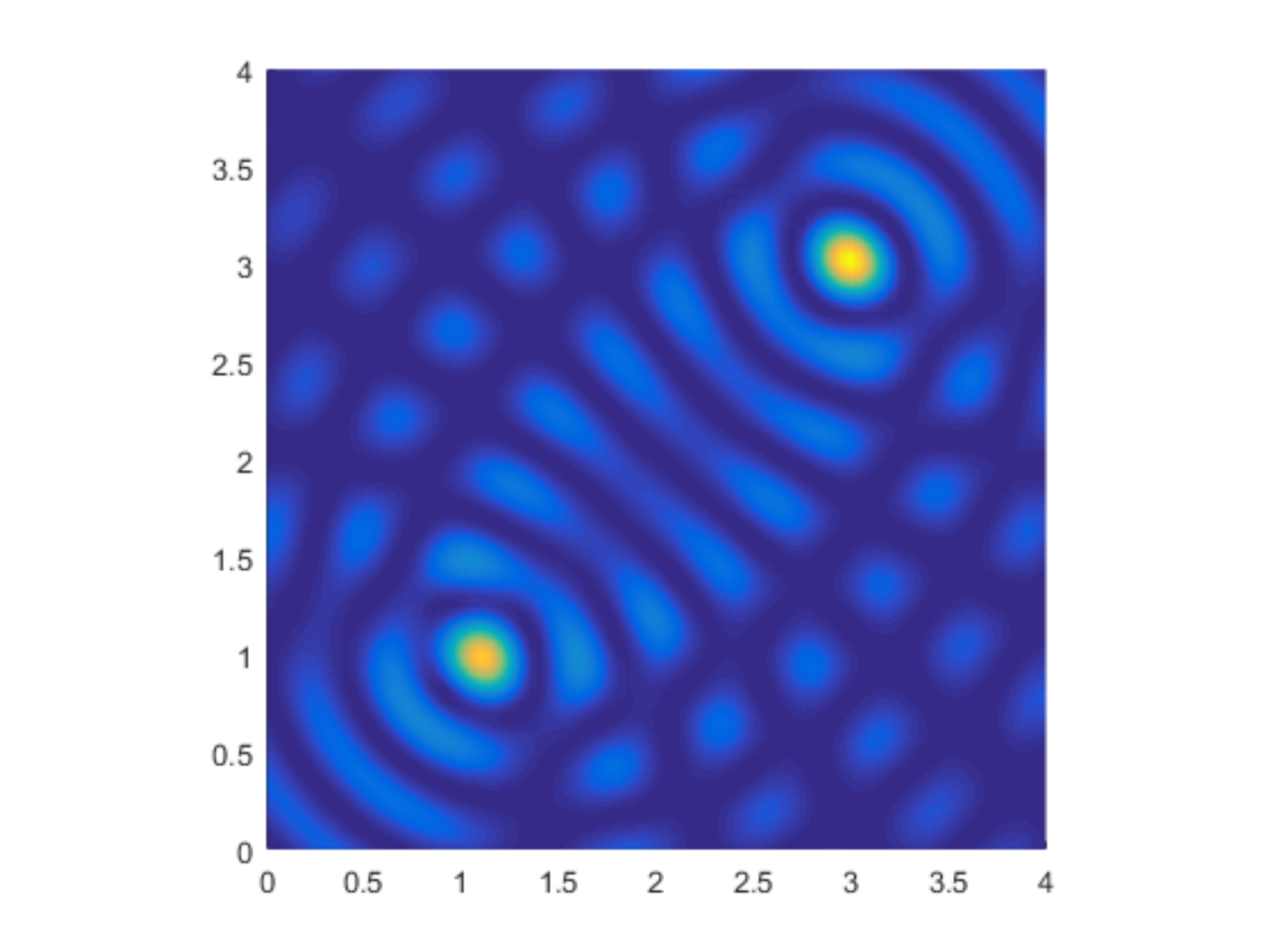}}
  \subfigure[\textbf{$\hth=(0,1)$.}]{
    \includegraphics[width=2in]{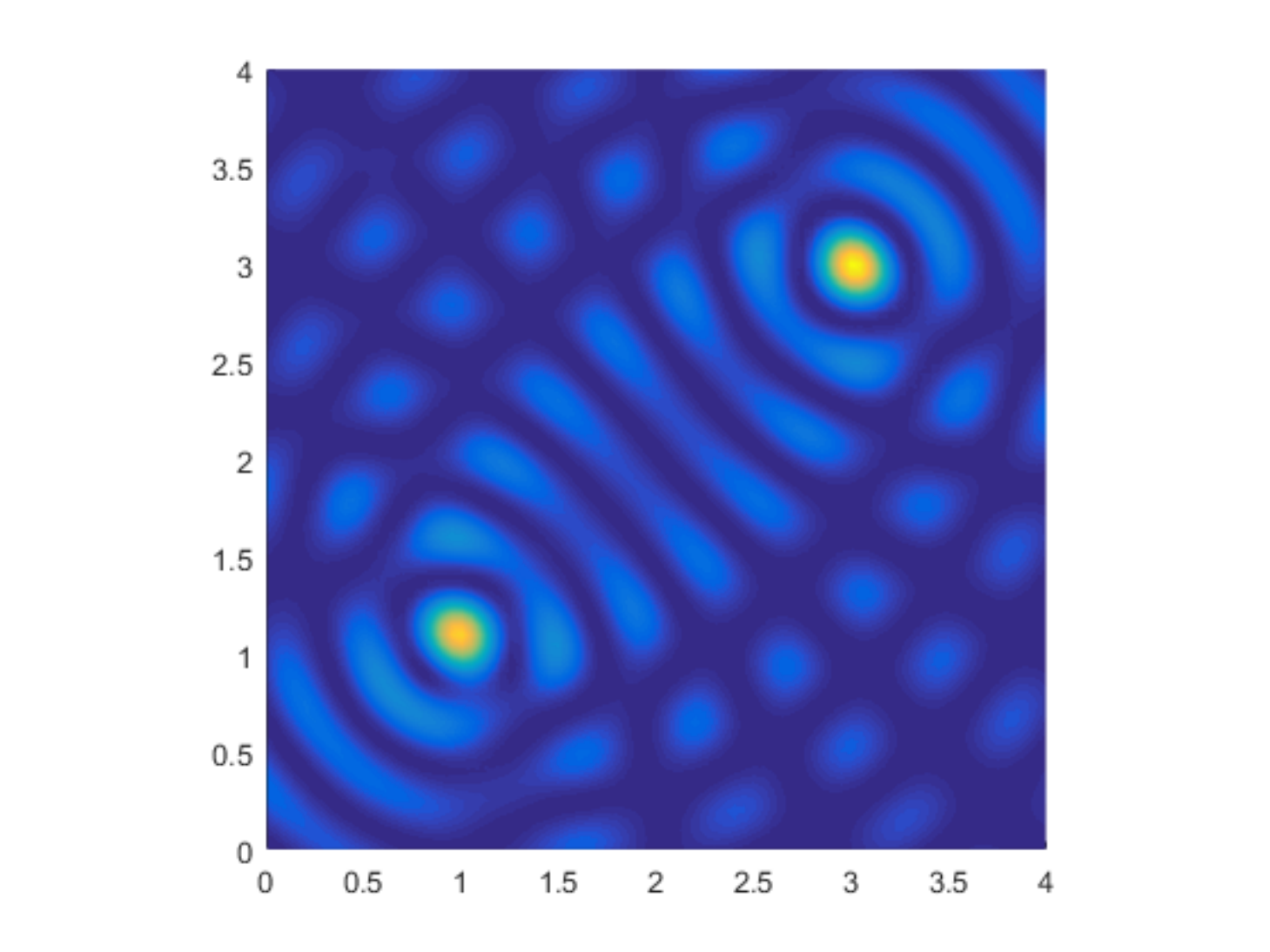}}
\caption{{\bf Example ${\bf I_3}$-Small.}\, Reconstruction of two small mixed type disks
with $10\%$ noise  and different incident waves.}
\label{3smallsh8}
\end{figure}

\section*{Acknowledgement}

The research of X. Ji is partially supported by the NNSF of China under grant 11271018 and 91630313.
and National Centre for Mathematics and Interdisciplinary Sciences, CAS.
The research of X. Liu is supported by the NNSF of China under grant 11571355 and the Youth Innovation
Promotion Association, CAS.
The research of B. Zhang is partially supported by the NNSF of China under grant 91630309.

\bibliographystyle{SIAM}

\end{document}